\documentclass{amsart}
 
\usepackage{amssymb}
\allowdisplaybreaks
\usepackage{stmaryrd} 
 
\usepackage{graphicx}
\usepackage[caption=false]{subfig}
 
\usepackage{float}
\usepackage[ruled,vlined,linesnumbered]{algorithm2e}
\SetKwInput{Input}{Input}
\SetKwInput{Output}{Output}
 
\usepackage{xcolor}
\usepackage{hyperref}
\hypersetup{
  bookmarksopen=true,
  bookmarksopenlevel=2,
  colorlinks=true,
  urlcolor=blue,
  citecolor=black!70!green,
  linkcolor=black!70!red
}

\usepackage{multirow}
\usepackage{booktabs}
 
\usepackage{tikz}
\usetikzlibrary{matrix,arrows, positioning}
\usetikzlibrary[decorations.pathmorphing,mindmap]
\usetikzlibrary{mindmap,matrix,arrows,decorations.pathmorphing,patterns,fadings}
\usetikzlibrary{cd} 
 
\usepackage{pgfplots}
\pgfplotsset{compat=1.18}
\usepackage{pgfplotstable}
\usepackage{enumitem}
 
\usepackage{mathtools}   
\usepackage{xurl}        
\usepackage{microtype}   
\usepackage{lmodern}     
 
\theoremstyle{definition}
\newtheorem{definition}{Definition}
 
\theoremstyle{plain}
\newtheorem{theorem}{Theorem}
\newtheorem{proposition}{Proposition}

\newtheorem{lemma}{Lemma}
\newtheorem{corollary}{Corollary}
 
\theoremstyle{remark}
\newtheorem{remark}{Remark}
 
\theoremstyle{plain}

\newcommand{\legendre}[2]{\ensuremath{\left( \frac{#1}{#2} \right) }}
 
\DeclareMathOperator{\GL}{GL}

\DeclareMathOperator{\Div}{Div}
\DeclareMathOperator{\Pic}{Pic}

\DeclareMathOperator{\NS}{NS}
\DeclareMathOperator{\rank}{rank}
\DeclareMathOperator{\Hom}{Hom}
\DeclareMathOperator{\End}{End}
\DeclareMathOperator{\charec}{char}

\DeclareMathOperator{\cl}{cl}

\DeclareMathOperator{\red}{red}
\DeclareMathOperator{\tr}{tr}
\DeclareMathOperator{\trd}{trd}
\DeclareMathOperator{\nrd}{nrd}

\DeclareMathOperator{\Gen}{Gen}

\newcommand{\F}{\mathbb{F}}
\newcommand{\Q}{\mathbb{Q}}
\newcommand{\Z}{\mathbb{Z}}

\newcommand{\curlyO}{\mathcal{O}}
\newcommand{\A}{\mathcal{A}}
\newcommand{\mi}{\mathbf{i}}
\newcommand{\mj}{\mathbf{j}}
\newcommand{\mL}{\mathcal{L}}
 
\title{Refined Humbert Invariants in Supersingular Isogeny Degree Analysis}
 
\author{Eda K{\i}r{\i}ml{\i}}
\address{University of Birmingham, United Kingdom}
\email{e.kirimli@bham.ac.uk}
 
\author{Gaurish Korpal}
\address{University of Auckland, New Zealand}
\email{gaurish.korpal@auckland.ac.nz}
 
\keywords{isogeny, superspecial surface, refined Humbert invariant, degree map}
 
\begin{document}

\begin{abstract}
  We focus on \emph{refined Humbert invariants} of principally polarized superspecial abelian surfaces, introduced by Kani in 1994. The main contributions are to enumerate principal polarizations on a superspecial surface, and for each polarization, to compute the refined Humbert invariant of a principally polarized superspecial abelian surface. Then, we present several applications of computing this invariant for isogeny-based cryptography. 
  First, we provide a decision algorithm to check if two given polarizations are isomorphic.
  Second, we present an efficient algorithm to determine the geometric type of a principally polarized superspecial surface.
  Third, we prove an upper bound on the largest minimal isogeny degree among pairs of supersingular elliptic curves, independent of their endomorphism-ring structures, and our experimental evidence verifies this claim up to $p=659$, $p\equiv 11\pmod{12}$.
  Fourth, we present experimental evidence for a minimum isogeny frequency within the proven upper bounds.
   Lastly, we provide a different perspective on the fixed isogeny degree problem using refined Humbert invariants and analyze it without explicit endomorphism rings. 
  
\end{abstract}

\maketitle

\section{Introduction}
The refined Humbert invariants of principally polarized superspecial surfaces underline several hard isogeny problems relevant to post-quantum cryptography~\cite{edachloe}, yet no prior work has computed them for superspecial surfaces or enumerated their principal polarizations. In this paper, we close this gap: we give the first computation of refined Humbert invariants of principally polarized superspecial surfaces, the first enumeration of their principal polarizations, and apply the resulting invariants to several isogeny problems.

Our strategy is based on refined Humbert invariants, whose definition and relevant properties we briefly summarize below.
 Let $\A$ be an abelian surface over $\overline{\F}_{p}$ and let $\NS(\A)$ denote its N\'eron-Severi group. The intersection product $(D_1\cdot D_2)$ of divisors $D_1$ and $D_2$ on $\A$ defines an integral quadratic form $q_{\A}$ on $\NS(\A)$,  Definition~\ref{intersectionform}. 
Since $\NS(\A)\cong \Z^{\rho}$ where $\rho = \rho(\A)$ is the Picard number of $\A$, the quadratic form\footnote{Throughout the paper, a quadratic form means always an integral quadratic form.} $q_{\A}$ is equivalent to an integral quadratic form $q$ in $\rho$ variables, so leading to an isomorphism $(\NS(\A), q_{\A})\cong (\Z^{\rho}, q)$ of quadratic modules.
 
Let $\theta$ be a principal polarization on $\A$, see Definition \ref{polarizations}. We define an integral quadratic form $\tilde{q}_{(\A,\theta)}$ on $\NS(\A)$ as
\begin{equation*}
\tilde{q}_{(\A,\theta)}(D) = (D \cdot \theta)^2 - 2(D \cdot D) \textrm{ for } D \in \NS(\A).
\end{equation*}
It follows that $\tilde{q}_{(\A,\theta)}(D + n\theta) = \tilde{q}_{(\A,\theta)}(D)$ for all $n\in \Z$, therefore we have a quadratic form $q_{(\A,\theta)}$ defined on the quotient module $\NS(\A,\theta) = \NS(\A)/ \Z \theta$, called the polarized  N\'eron-Severi group. The form $q_{(\A,\theta)}$ is a positive-definite quadratic form on the polarized N\'eron-Severi group $\NS(\A,\theta)\cong \Z^{\rho-1}$, and called a \emph{refined Humbert invariant}, see Section~\ref{sec:RHI} for details. 
 
 The computation of refined Humbert invariants is a challenging problem; this has been thoroughly examined in \cite{Kani94,KaniJac14,KaniMJ,KaniESCII,KaniESCI,KaniSubcoversofCurves}. The determination of a refined Humbert invariant is comparatively easier when $\A\cong E_1 \times E_2$ for elliptic curves $E_1$, $E_2$. In this case, there exists a relation between divisors in the N\'eron-Severi group and isogenies between $E_1$ and $E_2$. By using Theorem~\ref{mainisomorphism}, we can represent divisors on $\NS(E_1\times E_2)$  as $D=\mathcal{D}(a,b,\varphi)$ with two integers $a,b$ and an isogeny $\varphi$. This representation allows us to compute the intersection number of divisors more easily. The relation between the refined Humbert invariant and the degree map is given by Lemma \ref{fromRHItodegreemap}
\begin{equation*}
q_{(E_1 \times E_2, \theta_{E_1\times E_2})} \textrm{ is $\Z$-equivalent to } x^2 + 4 q_{E_1,E_2} 
\end{equation*}
where $\theta_{E_1\times E_2}$ is a product polarization and $q_{E_1,E_2} \in \Z[y_1,\ldots,y_n]$ is the degree map.
 
First, we develop a decision algorithm to determine whether two principal polarizations on a superspecial surface are isomorphic. Although this problem has a classical formulation as the equivalence problem for positive-definite binary quaternion Hermitian forms \cite{ibukiyama1986supersingular, HashIbu}, i.e., for a maximal quaternion order $\curlyO$, testing equivalence of principal polarizations given by two uni-modular Hermitian matrices $g_1, g_2\in \GL_{2}(\mathcal{O})$ requires finding a matrix
$u\in \GL_{2}(\mathcal{O})$
satisfying $g_{2}=u^{*}g_{1}u$ where $*$ is the conjugate-transpose. This problem is computationally demanding, and, to the best of our knowledge, no decision algorithm has previously been given for the principal polarizations of superspecial abelian surfaces.
 
Secondly, we provide an explicit algorithm to determine the geometric type of a principally polarized superspecial abelian surface: whether it is the Jacobian of a genus-two curve with its canonical principal polarization or a product of two elliptic curves with the product polarization. This is achieved by the criterion in Proposition~\ref{irreducibilitycriterion}.
 
Thirdly, we prove that, for a generic prime $p$, the maximum of the minimum distances between supersingular elliptic curves is upper-bounded by $\sqrt{\frac{p}{2}}$, without prior explicit knowledge of their endomorphism rings.  
We first discuss previous proven bounds, improve the bound itself, and validate their correctness with experiments for primes $p< 660$, $p\equiv 11\pmod {12}$, by assuming less information. Without refined Humbert invariants, verifying this heuristic for small examples would involve brute-force computations of isogenies or quaternion orders between pairs of elliptic curves. Our approach instead reduces the problem to simple checks on quadratic forms.
 
Fourthly, we run experiments on the frequency and distribution of isogeny degrees between supersingular elliptic curves defined over $\mathbb{F}_{p^2}$. We run these experiments directly on the degree maps themselves, without computing isogenies or endomorphism rings. 
 
Lastly, we give a different approach to the fixed-degree isogeny problem: given supersingular curves $E_1,E_2$ and a target degree $N$, we show that an efficient algorithm for computing refined Humbert invariants yields the degree map $q_{E_1,E_2}$, addressing the intermediate-degree range left open between lattice-reduction methods for small degrees and KLPT-type algorithms~\cite{KLPT,SQIsign} for large ones. This bears directly on schemes such as SQIsign, whose verification step is non-deterministic because \textsf{SigningKLPT} does not guarantee a specific isogeny degree, slowing the protocol.
 
\paragraph{Contribution.}
We start with a supersingular product surface $\A = E_1\times E_2$, and choose a principal polarization $\theta$ and an arbitrary divisor $D$ on  $\NS(E_1\times E_2)$ to calculate the refined Humbert invariant, see Definition~\ref{RHIdefn}. Thus, we apply the irreducibility criterion, Proposition~\ref{irreducibilitycriterion}, of the refined Humbert invariant $q_{(\A,\theta)}$  to decide whether it is a product of two elliptic curves or not. Later, we use the relation between the refined Humbert invariant and the degree map (Definition \ref{RHI=x2+4}), then pull out the degree map and check its minimum value. This same degree map underlies our experiments on isogeny-degree distributions and our treatment of the fixed-degree isogeny problem.
 
In terms of computational assumptions on the degrees of isogenies between supersingular elliptic curves,  we want to investigate the minimal integer represented by such degree maps for different choices of principal polarizations $\theta$, as they are exactly the minimum isogenies between two supersingular elliptic curves. Then, we aim to find the largest possible minimal degree that allows us to bound the maximum minimum isogeny degree reached by every pair of supersingular elliptic curves. That is, for a fixed prime $p$, we aim to understand the value 
\begin{equation}\label{maxmin}  
d\coloneqq \max_{E_1,E_2} \{ \min \{N :q_{E_1,E_2}(t_1,t_2,t_3,t_4) = N \text{ for some } t_1, t_2,t_3,t_4 \in \mathbb{Z}\}\}.\end{equation}
This set ranges over different supersingular curves $E_1$ and $E_2$ over $\F_{p^2}$.
 
\paragraph{Outline.}
The paper is organized into preliminaries, computations, and applications. In Section~\ref{sec:prelim}, we summarize the theoretical preliminaries. In Section~\ref{designing}, we develop algorithms to compute refined Humbert invariants and degree maps, and present how computations were performed. In Section~\ref{sec:apps}, we discuss the applications of our method, including the proof of the upper bound on the minimum isogeny degrees. We conclude the paper in Section~\ref{sec:conclusion}.

\section{Preliminaries}  \label{sec:prelim}
\subsection{Refined Humbert invariant} \label{sec:RHI}

In this section, we introduce the theory of refined Humbert invariants defined by Kani \cite{Kani94} in 1994, which is the main tool of this paper. This invariant is very beneficial in the interplay between geometric and arithmetic problems. Many applications of these invariants can be found in \cite{KaniJac14,KaniMJ}. This section mainly follows \cite{KaniESCII,KaniESCI}. We only present essential facts of refined Humbert invariants here. 

\begin{definition}\cite[p.357]{hartshorne2013algebraic}\label{divisorequivalence}
Let $\A$ be an abelian surface over a field $K$. 
Let $\Div(\A)$ be the set of divisors of $\A$.
If $D_1$ and $D_2 \in \Div(\A)$, then we say that $D_1$ is \emph{numerically equivalent} to $D_2$, denoted by $D_1 \equiv D_2$, and if for all $D \in \Div(\A)$ we have that
\[(D_1 \cdot D) = (D_2 \cdot D),\]
where $(\cdot)$ denotes the intersection number.
\end{definition}

The intersection theory of abelian varieties is a vast subject; for technical details, we refer to \cite[Chapter 4.1]{shafarevich} and  \cite[Chapter V.1]{hartshorne2013algebraic}. Fortunately, we utilize a simple formula for the intersection formula in the case of $\A=E_1\times E_2$ given by Theorem~\ref{mainisomorphism} below, and it is sufficient for our calculations.

\subsubsection{N\'eron-Severi group.} Let $\A/K$ be an abelian surface. We define the \emph{N\'eron-Severi group} $\NS(\A)$ of $\A$ to be 
    \[\NS(\A) = \Div(\A)/\equiv,\]
where $\equiv$ is the equivalence defined in Definition~\ref{divisorequivalence}.
\noindent
This definition agrees with the usual definition of $\NS(\A)$ in \cite[p.101]{langabelianvar} since the $\Pic^0(\A)$-equivalence and the numerical equivalence coincide by the Corollary of Theorem V.1 in \cite{langabelianvar}.

\begin{definition}\label{polarizations}
Let $\A/K$ be an abelian surface. We define
\begin{align*}
\mathcal{P}(\A)= \{\cl(D)\in \NS(\A) :D\in \Div(\A) \emph{ is ample and } (D\cdot D) =2 \}.
\end{align*}
to be \emph{the set of principal polarizations} of $\A$.
\end{definition}
\noindent
By using the Nakai-Moishezon criterion~\cite[Theorem V.1.10]{hartshorne2013algebraic}, we see that if $D \in \NS(\A)$ is ample, then $D'\in \cl(D)$ is ample. 

\begin{definition}\label{intersectionform}
The intersection product $(D_1 \cdot  D_2)$ of divisors $D_1$, $D_2$ on an abelian surface, $\A$ defines an integral quadratic form $q_{\A}$ on $\NS(\A)$, called the \emph{intersection form}:$$q_{\A}(D)=\frac{1}{2}(D\cdot D) \emph{ for all } D\in \Div(\A).$$ 
\end{definition}
\noindent
Since $\NS(\A)\cong \Z^{\rho}$ where $\rho = \rho(\A)$ is the Picard number of $\A$, the form $q_{\A}$ is equivalent to an integral quadratic form $q$ in $\rho$ variables, so we obtain an isomorphism $(\NS(\A), q_{\A})\cong (\Z^{\rho}, q)$ of quadratic modules.

\begin{definition} \cite[Corollary III.6.3]{silverman2009arithmetic}\label{defn:degreemap}
We define the \emph{degree map} (or \emph{degree quadratic form})
$$ q_{E_1,E_2}(\varphi) = \deg(\varphi) \emph{  for  }  
\varphi\in \Hom(E_1,E_2).$$
The degree map $q_{E_1,E_2}$ is a positive definite quadratic form on $\Hom(E_1, E_2)$ in $r$ variables, where $r = \rank(\Hom(E_1, E_2)) = \dim_{\mathbb{Q}}(\End^{0}(E_i))$.\end{definition}
\noindent
For a fixed basis of $\Hom(E_1, E_2)$,  the degree map $q_{E_1, E_2}$  is an explicit positive definite quadratic form in $r$ variables.

\begin{definition} \cite[Corollary of Section 6.4]{fultonalgebraiccurves}

Let $X$ and $Y$ be varieties. If $h :X \rightarrow Y$ is a morphism of varieties, the graph of $h$, denoted by $\Gamma_h$ is defined to be
$\{(x, y) \in X \times Y | \quad y = h (x)\}$.
\end{definition}

We now specialize in the case of products of two elliptic curves.
\begin{theorem} \emph{\cite[Proposition 22]{KaniMJ}}\label{mainisomorphism} 
Let $\A= E_1 \times E_2$ be a product of two elliptic curves. Then we have a group isomorphism 
\begin{align} 
   \mathcal{D}:= \mathcal{D}_{E_1,E_2} :\mathbb{Z} \oplus \mathbb{Z} \oplus \Hom(E_1,E_2) \longrightarrow \NS(\A) \\\empty
    \mathcal{D}(a, b, \varphi) = (a-1)\theta_1+(b-\deg (\varphi))\theta_2 + \Gamma_{-\varphi}
\end{align}
where $\theta_i = p_i^{*}(0_{E_i})$, and $\Gamma_{-\varphi}$ is the graph of $-\varphi$. Then the rule $(a, b, \varphi)\rightarrow  \mathcal{D}(a, b, \varphi) 
\in \NS(\A)$ defines a group
isomorphism Moreover, for two divisors $D_1= \mathcal{D}(a,b,\varphi)$ and $D_2= \mathcal{D}(a',b',\varphi')$ in $\NS(\A)$, the intersection number of the divisors is given by 
\begin{equation}\label{intersectionoftwodivisors}
    (D_1 \cdot D_2) = ab'+ a'b - \beta_d(\varphi,\varphi')
\end{equation}
where $\beta_d$ is the bilinear form associated with the $q_{E_1,E_2}$ on $\Hom(E_1,E_2)$. Thus,\footnote{We would like to emphasize that there is no isogeny computation needed in the intersection formulas, and only the degrees of the isogenies are involved. This gives a different approach to the isogeny problem without computing an isogeny.}
\begin{equation}
(\mathcal{D}(a, b, \varphi)\cdot \mathcal{D}(a, b, \varphi)) = 2(ab -\deg(\varphi)),\quad
 (\mathcal{D}(a, b, \varphi)\cdot (x\theta_1 + y\theta_2)) = bx+ay.
\end{equation} 
\end{theorem}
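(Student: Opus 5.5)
The plan is to prove Theorem~\ref{mainisomorphism} in two stages. First I verify the intersection formula~\eqref{intersectionoftwodivisors} on explicit generators. Then I deduce injectivity from the formula and surjectivity from the classical description of $\NS(E_1\times E_2)$.

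For the intersection numbers I work with $\theta_1=\{0\}\times E_2$, $\theta_2=E_1\times\{0\}$ and the graphs $\Gamma_\psi\subset E_1\times E_2$ for $\psi\in\Hom(E_1,E_2)$. The basic computations are the following.
\begin{itemize}
\item $(\theta_1\cdot\theta_1)=(\theta_2\cdot\theta_2)=0$ and $(\theta_1\cdot\theta_2)=1$, since these are fibres of the two projections.
\item $(\Gamma_\psi\cdot\theta_1)=1$, because $\Gamma_\psi$ meets $\{0\}\times E_2$ transversally in one point.
\item $(\Gamma_\psi\cdot\theta_2)=\deg\psi$, because the intersection is $\ker\psi\times\{0\}$, counted with multiplicity; this covers the inseparable case.
\item $(\Gamma_\psi\cdot\Gamma_\psi)=0$, by translation invariance: $\Gamma_\psi$ is an elliptic curve inside an abelian surface, so its normal bundle is trivial.
\item For $\psi\ne\psi'$, $(\Gamma_\psi\cdot\Gamma_{\psi'})=\deg(\psi-\psi')$, since the intersection is the graph over $\ker(\psi-\psi')$. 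I treat this via pullback along $(1,\psi-\psi')$ or via the automorphism $(x,y)\mapsto(x,y-\psi'(x))$, which sends $\Gamma_{\psi'}$ to $\theta_2$.
\end{itemize}

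Plugging $D=(a-1)\theta_1+(b-\deg\varphi)\theta_2+\Gamma_{-\varphi}$ into these values gives $(D\cdot\theta_1)=b$ and $(D\cdot\theta_2)=a$. This is the formula $(D\cdot(x\theta_1+y\theta_2))=bx+ay$, and it also explains the normalisation in the definition of $\mathcal D$. For two such divisors, bilinear expansion gives $(\Gamma_{-\varphi}\cdot\Gamma_{-\varphi'})=\deg(\varphi-\varphi')$. Using $\beta_d(\varphi,\varphi')=\deg(\varphi+\varphi')-\deg\varphi-\deg\varphi'$, which equals $\deg\varphi+\deg\varphi'-\deg(\varphi-\varphi')$, the cross terms collapse to $ab'+a'b-\beta_d(\varphi,\varphi')$. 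I will carry out this bookkeeping once, and the self-intersection formula $2(ab-\deg\varphi)$ then follows as the diagonal case.

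I also need $\mathcal D$ to be a homomorphism. That amounts to the theorem of the square type relation
\[
\Gamma_{-(\varphi+\varphi')}\equiv \Gamma_{-\varphi}+\Gamma_{-\varphi'}-\theta_1+c\,\theta_2
\]
in $\NS$, where $c$ is chosen to match the degree terms. Because numerical and algebraic equivalence coincide here, it suffices to check that both sides have equal intersection numbers with $\theta_1$, $\theta_2$ and every $\Gamma_\psi$. This is a finite computation with the formulas above, and it uses the parallelogram law for $\deg$. Once the relation holds, $\mathcal D$ is additive in $\varphi$, and it is visibly additive in $a$ and $b$.

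Injectivity then follows. If $\mathcal D(a,b,\varphi)\equiv0$, intersecting with $\theta_1$ and $\theta_2$ gives $a=b=0$. Intersecting with all $\mathcal D(0,0,\varphi')$ then gives $\beta_d(\varphi,\varphi')=0$ for all $\varphi'$, so $\varphi=0$ because $\beta_d$ is positive definite.

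Surjectivity is the main obstacle. For it I will invoke the standard isomorphism $\NS(E_1\times E_2)\cong\Z\theta_1\oplus\Z\theta_2\oplus\Hom(E_1,E_2)$, which comes from the seesaw principle and the identification $\Pic(E_1\times E_2)/(p_1^*\Pic E_1+p_2^*\Pic E_2)\cong\Hom(E_1,\Pic^0E_2)\cong\Hom(E_1,E_2)$. Explicitly, a class $L$ maps to the homomorphism $x\mapsto L|_{\{x\}\times E_2}-L|_{\{0\}\times E_2}$, and $\Gamma_{-\varphi}$ corresponds to $\pm\varphi$. Given $L$, I subtract $\Gamma_{-\varphi}$ for the corresponding $\varphi$. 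The remainder is then in $p_1^*\Pic E_1+p_2^*\Pic E_2$, whose image in $\NS$ is $\Z\theta_1+\Z\theta_2$, so every class is hit. As a fallback I would argue by discriminants: the image of $\mathcal D$ is a finite-index sublattice with Gram discriminant equal to $\pm$ that of the full $\NS$, computed from the unimodularity of $\NS(E_1\times E_2)$. I expect this fallback to be more delicate, so the seesaw route is my first choice.
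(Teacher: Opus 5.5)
The paper gives no proof of this statement; it quotes Kani. Your argument therefore has to stand on its own, and as written its central computation is false.

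\textbf{The main error.} You fix $\theta_1=\{0\}\times E_2$ and $\theta_2=E_1\times\{0\}$. Your own table then gives $(\Gamma_\psi\cdot\theta_1)=1$ and $(\Gamma_\psi\cdot\theta_2)=\deg\psi$. For $D=\mathcal{D}(a,b,\varphi)$ this yields $(D\cdot\theta_1)=b-\deg\varphi+1$ and $(D\cdot\theta_2)=a-1+\deg\varphi$, not $b$ and $a$. The claimed collapse of the cross terms fails as well: at $\varphi=\varphi'=0$ you get $ab'+a'b+a+a'-b-b'-2$.

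Under these labels the statement itself is false. Since $\Gamma_{-0}=E_1\times\{0\}=\theta_2$, we get $\mathcal{D}(0,0,0)=\theta_2-\theta_1\neq 0$, so $\mathcal{D}$ is not a homomorphism. Your square relation with the term $-\theta_1$ already fails at $\varphi=\varphi'=0$. Also $\mathcal{D}(1,1,0)=2\theta_2$ has self-intersection $0$ rather than $2$.

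The cause is that the labeling $\theta_i=p_i^*(0_{E_i})$ in the statement is swapped. The requirement $\mathcal{D}(0,0,0)=0$ forces $\theta_1=\Gamma_0=E_1\times\{0\}=p_2^*(0_{E_2})$ and $\theta_2=\{0\}\times E_2=p_1^*(0_{E_1})$. This is also the labeling under which the product polarization is $\theta_1+\theta_2=\mathcal{D}(1,1,0)$, as the paper's later results require. You asserted the outcome of the bookkeeping instead of carrying it out; doing it would have exposed this immediately.

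\textbf{With the labels corrected, your plan works.} Now $(\Gamma_\psi\cdot\theta_1)=\deg\psi$ and $(\Gamma_\psi\cdot\theta_2)=1$. This gives $(D\cdot\theta_1)=b$ and $(D\cdot\theta_2)=a$. By the parallelogram law,
$(D_1\cdot D_2)=ab'+a'b-\bigl(\deg\varphi+\deg\varphi'-\deg(\varphi-\varphi')\bigr)=ab'+a'b-\beta_d(\varphi,\varphi')$.
The additivity relation holds with $c=\beta_d(\varphi,\varphi')$.

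\textbf{Two smaller repairs.}
First, checking that relation only against $\theta_1$, $\theta_2$ and the $\Gamma_\psi$ is legitimate only once you know these classes span $\NS(\A)\otimes\Q$. That is exactly what your seesaw step proves. So do the generation step first, since it does not use additivity of $\mathcal{D}$. Alternatively, read additivity directly off the homomorphism $\NS(\A)\to\Hom(E_1,E_2)$.

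Second, drop the fallback. $\NS(E_1\times E_2)$ is not unimodular as soon as $\Hom(E_1,E_2)\neq 0$. By the determinant lemma the paper quotes, its determinant is $\pm\det(\Hom(E_1,E_2),\beta_d)$, which is $-p^2$ for superspecial $E\times E$. So there is no independently known discriminant to compare against.

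Your injectivity argument and the seesaw surjectivity argument are fine once the labels are fixed.
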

\noindent
Recall that a bilinear form $\beta_d$ is defined by 
$$\beta_d(\varphi,\varphi')= q_{E_1,E_2}(\varphi+ \varphi')- q_{E_1,E_2}(\varphi)-q_{E_1,E_2}(\varphi').$$
Using the  \eqref{intersectionoftwodivisors} above, we can easily calculate the intersection numbers of divisors on abelian product surfaces. Moreover, we can numerically characterize principal polarizations as follows.

\begin{corollary}
\emph{\cite[Corollary 25]{KaniMJ}}\label{polarizationcondition}
Let $\A/K$ be an abelian surface and let
$D = \mathcal{D}(a,b,\varphi) \in \NS(\A)$,
using the notation of \emph{Theorem~\ref{mainisomorphism}}.
Then $D \in \mathcal{P}(\A)$ if and only if $a > 0$ and $ab - \deg(\varphi) = 1$. 
Thus, every principal polarization of $\A$ has the form $\mathcal{D}(n_1,n_2,\varphi)$ 
with $\varphi \in \Hom(E_1,E_2)$ and $n_1,n_2 >0$ with $n_1 n_2- (\deg(\varphi))=1$.
\end{corollary}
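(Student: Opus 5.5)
The plan is to combine the intersection formula of Theorem~\ref{mainisomorphism} with a numerical ampleness criterion on abelian surfaces. Here $\A = E_1\times E_2$, since $\mathcal{D}$ is only defined in that case. For $D=\mathcal{D}(a,b,\varphi)$, Theorem~\ref{mainisomorphism} gives $(D\cdot D)=2(ab-\deg\varphi)$. So the condition $(D\cdot D)=2$ in Definition~\ref{polarizations} is exactly $ab-\deg(\varphi)=1$. What remains is to show that, once $(D\cdot D)=2$, the class $D$ is ample if and only if $a>0$.

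For ampleness I will use the standard criterion for abelian surfaces (Riemann--Roch together with Mumford's vanishing theorem, or Nakai--Moishezon specialized to abelian surfaces): a class $D$ with $(D\cdot D)>0$ is ample if and only if $(D\cdot H)>0$ for one (equivalently every) ample $H$. The reason is that the classes of positive self-intersection form two opposite cones, and ampleness picks out one of them.

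\emph{Necessity.} Take $H=\theta_1+\theta_2=\mathcal{D}(1,1,0)$, which is ample as the product polarization. Theorem~\ref{mainisomorphism} gives $(D\cdot\theta_1)=b$ and $(D\cdot\theta_2)=a$. If $D$ is ample, then $D$ meets the effective curves $\theta_1,\theta_2$ positively, so $a,b>0$.

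\emph{Sufficiency.} Suppose $a>0$ and $ab-\deg\varphi=1$. Since $\deg\varphi\ge 0$, we get $ab\ge1$, hence $b>0$, and therefore $(D\cdot H)=a+b>0$. Together with $(D\cdot D)=2>0$, the criterion shows $D$ is ample. This also yields the final sentence of the statement: $n_1=a$ and $n_2=b$ are both positive, and $n_1n_2-\deg\varphi=1$.

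The main obstacle is justifying that $(D\cdot D)>0$ and $(D\cdot H)>0$ together imply ampleness on an abelian surface. I would argue it as follows. By Riemann--Roch, $\chi(D)=(D\cdot D)/2>0$, so either $D$ or $-D$ is effective, and $(D\cdot H)>0$ rules out $-D$. On an abelian surface, an effective divisor with positive self-intersection is ample. This holds because $D\cdot C\ge 0$ for every curve $C$ (curves can be translated), and if $D\cdot C=0$ for some curve $C$, then the Hodge index theorem forces $C^2<0$, which is impossible on an abelian surface. Nakai--Moishezon then applies. I would cite this lemma from Mumford or Lang rather than reprove it in full.
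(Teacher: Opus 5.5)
Your proof is correct and follows the standard route; the paper gives no proof of its own and only cites Kani's Corollary~25, whose argument is of this kind. Theorem~\ref{mainisomorphism} gives $(D\cdot D)=2(ab-\deg\varphi)$ and $(D\cdot(\theta_1+\theta_2))=a+b$, and the ampleness criterion on abelian surfaces ($(D\cdot D)>0$ together with $(D\cdot H)>0$ for an ample $H$, which you justify correctly via Riemann--Roch, translation of curves, Hodge index and Nakai--Moishezon) then yields the statement, the final sentence using surjectivity of $\mathcal{D}$ and $\deg\varphi\geq 0$.
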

\noindent
The relation between the intersection form \eqref{intersectionform} and the degree map \eqref{defn:degreemap} is given by
\begin{align}
    q_{\A}(x, y,\varphi)= xy- q_{E_1,E_2}(\varphi).
\end{align}
where $xy$ denotes the quadratic form defined
by the hyperbolic plane.
Note that $q_{\A}$ is an indefinite integral quadratic form in $\rho=r+2$ variables where $\rho$ is the Picard number of $\A$,  and $r$ is the rank of $\Hom(E_1,E_2)$.

\begin{lemma}\emph{\cite[Lemma 28]{KaniESCI}} 
The determinant of the N\'eron-Severi group of $E_1\times E_2$ with respect to the intersection form is given by
$$ \det(\NS(E_1 \times E_2)) = (-1)^{\rho -1} \det(\Hom(E_1,E_2), \beta_d),$$
where $\rho =\rank(\NS(E_1\times E_2)) = \rank(\Hom(E_1, E_2))+2$.
\end{lemma}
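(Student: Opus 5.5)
We compute the Gram determinant of the intersection pairing in the basis supplied by Theorem~\ref{mainisomorphism}. Since $\mathcal{D}$ is a group isomorphism, a $\Z$-basis of $\NS(E_1\times E_2)$ comes from a basis $\varphi_1,\dots,\varphi_r$ of $\Hom(E_1,E_2)$. Take
\[
e_1=\mathcal{D}(1,0,0),\quad e_2=\mathcal{D}(0,1,0),\quad f_i=\mathcal{D}(0,0,\varphi_i).
\]
Then $\{e_1,e_2,f_1,\dots,f_r\}$ is a $\Z$-basis of $\NS(\A)$, of rank $\rho=r+2$.

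Next we read off all pairings from \eqref{intersectionoftwodivisors}.
\begin{itemize}
\item $(e_1\cdot e_1)=0$ and $(e_2\cdot e_2)=0$, while $(e_1\cdot e_2)=1\cdot 1+0\cdot 0-0=1$.
\item Every $e_j$ is orthogonal to every $f_i$: the $a,b$ entries of $f_i$ vanish, and the bilinear term $\beta_d(0,\varphi_i)$ is $0$.
\item $(f_i\cdot f_j)=-\beta_d(\varphi_i,\varphi_j)$.
\end{itemize}
So the Gram matrix is block diagonal:
\[
G=\begin{pmatrix}0&1\\1&0\end{pmatrix}\oplus\bigl(-B\bigr),
\qquad B=\bigl(\beta_d(\varphi_i,\varphi_j)\bigr)_{i,j}.
\]
Hence $\det G=(-1)\cdot(-1)^r\det B=(-1)^{r+1}\det B=(-1)^{\rho-1}\det B$, using $r+1=\rho-1$. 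Here $\det B$ is by definition $\det(\Hom(E_1,E_2),\beta_d)$.

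Two small points remain. First, the determinant of a lattice is independent of the chosen $\Z$-basis, since a change of basis multiplies it by $\det(U)^2=1$ for $U\in\GL_\rho(\Z)$. Second, we should state which normalization the determinant uses.
\begin{itemize}
\item If $\det(\NS)$ means the Gram matrix of the intersection pairing $(D_1\cdot D_2)$, the computation above applies directly.
\item The statement is in terms of $\beta_d$, which is the polarization of $q_{E_1,E_2}$ (so $\beta_d(\varphi,\varphi)=2\deg\varphi$). This matches the intersection pairing, which is the polarization of $q_\A=\tfrac12(D\cdot D)$. So the normalizations are consistent and no factors of $2$ appear.
\end{itemize}

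The only step needing care is checking that the chosen elements really form a basis and that the cross terms vanish. The first follows from $\mathcal{D}$ being an isomorphism. The second must be read off the formula with attention to the $-\deg(\varphi)\theta_2$ shift built into $\mathcal{D}$. Formula \eqref{intersectionoftwodivisors} already absorbs that shift, so the cross terms vanish exactly as listed.
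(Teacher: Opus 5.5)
Your proposal is correct and follows essentially the same route as the cited result: Theorem~\ref{mainisomorphism} identifies $(\NS(\A),q_\A)$ with the orthogonal sum of the hyperbolic plane $xy$ (Gram determinant $-1$) and $(\Hom(E_1,E_2),-q_{E_1,E_2})$, which is exactly the relation $q_\A(x,y,\varphi)=xy-q_{E_1,E_2}(\varphi)$ the paper records just before the lemma, so the determinant is $(-1)\cdot(-1)^{r}\det(\Hom(E_1,E_2),\beta_d)$ with $r=\rho-2$. Your normalization remark also matches the paper's later usage, where $\det(\Hom(E,E),\beta_d)=p^2$ yields $\det(q_{E\times E})=-p^2$ in \eqref{eq:29p2}.
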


\subsubsection{Polarized N\'eron-Severi group.} 

Let $\A/K$ be an abelian surface and let $\theta \in \mathcal{P}(\A)$ be a principal polarization of $\A$. We define the \emph{polarized N\'eron-Severi group} of $(\A,\theta)$ to be    \[\NS(\A,\theta) \coloneqq \NS(\A)/\Z\theta.\]

Kani discusses in \cite[\S 3]{Kani94} that there is a well-defined map on $\NS(\A,\theta)$, and this defines a positive-definite quadratic form on $\NS(\A,\theta)$. 
Suppose that $\A$ has a principal polarization $\theta\in\mathcal{P}(\A)$. Then the quadratic form $\tilde{q}_{(\A,\theta)}$ on $\NS(\A)$ is
\begin{equation} \label{eq:tildeform}
\tilde{q}_{(\A,\theta)}(D) = (D \cdot \theta)^2 - 2(D \cdot D), \textrm{ for } D \in \NS(\A).
\end{equation}
It is clear to see that $\tilde{q}_{(\A,\theta)}(D + n\theta) = \tilde{q}_{(\A,\theta)}(D)$ for all $n\in \Z$. As a consequence, $\tilde{q}_{(\A,\theta)}$ actually leads to a quadratic form $q_{(\A,\theta)}$ on the quotient module $\NS(\A,\theta)$. 
\begin{definition}\label{RHIdefn}
    Let $(\A,\theta)$ be a principally polarized abelian surface. A
    \emph{refined Humbert invariant}\footnote{A refined Humbert invariant is considered up to equivalences, so mostly we refer it as the refined Humbert invariant up to isometries for a fixed principally polarized abelian surface $(\A,\theta)$} $q_{(\A,\theta)}$ of $(\A,\theta)$ is a positive-definite quadratic form on $\NS(\A,\theta)$, or more precisely, the quadratic module $(\NS(\A, \theta), q_{(\A,\theta)})$, satisfying,
    for all $\cl(D)\in \NS(\A,\theta)$
    \[q_{(\A,\theta)}(D) = (D \cdot\theta)^2 - 2(D\cdot D).\]
\end{definition}

\begin{proposition}\emph{\cite[Lemma 30]{KaniESCI}}\label{prop:detNS}
Let $\rho = \rank(\NS(\A))$. Then the determinant of the quadratic module $(\NS(\A, \theta ), q_{(\A,\theta )})$ is related to that of the N\'eron–Severi group by the formula
$$\det(\NS(\A,\theta), q_{(\A,\theta )}) = \frac{1}{2} (-4)^{\rho -1}\det(\NS(\A), q_{\A}).$$
\end{proposition}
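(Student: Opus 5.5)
The plan is to compute both determinants in a basis of $\NS(\A)$ adapted to $\theta$ and compare them with a Schur complement. First I fix the convention: the determinant of a quadratic module $(M,q)$ is the determinant of the Gram matrix of the associated bilinear form $\beta_q(x,y)=q(x+y)-q(x)-q(y)$. This is the convention already used for $\beta_d$ on $\Hom(E_1,E_2)$. With it, the bilinear form of $q_{\A}=\frac12(D\cdot D)$ is the intersection product $(D\cdot E)$. Polarizing Definition~\ref{RHIdefn}, the bilinear form of $q_{(\A,\theta)}$ is
\[
\tilde\beta(D,E)=2(D\cdot\theta)(E\cdot\theta)-4(D\cdot E).
\]

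The first real step is to show that $\theta$ is primitive in $\NS(\A)$, so that $\NS(\A,\theta)$ is free of rank $\rho-1$ and $\theta$ extends to a $\Z$-basis. If $\theta=kD$ with $k\ge 1$, then $2=(\theta\cdot\theta)=k^2(D\cdot D)$. On an abelian surface $(D\cdot D)$ is even (Riemann--Roch gives $\chi(D)=\frac12(D\cdot D)\in\Z$), hence $k=1$. I would then choose a basis $\theta,e_2,\dots,e_\rho$ of $\NS(\A)$. The images $\bar e_2,\dots,\bar e_\rho$ form a basis of $\NS(\A,\theta)$.

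Next, write $a_i=(e_i\cdot\theta)$ and $g=((e_i\cdot e_j))_{i,j\ge2}$. The Gram matrix of the intersection form is then
\[
G=\begin{pmatrix}2 & a^{T}\\ a & g\end{pmatrix}.
\]
The Schur complement with respect to the invertible entry $2$ gives $\det G=2\det\bigl(g-\tfrac12 aa^{T}\bigr)$. The Gram matrix of $\tilde\beta$ on $\NS(\A,\theta)$ in the basis $\bar e_i$ is
\[
M=2aa^{T}-4g=-4\bigl(g-\tfrac12 aa^{T}\bigr),
\]
which is well defined because $\tilde\beta(\theta,\cdot)=2\cdot 2(\cdot\,\theta)-4(\theta\cdot\,\cdot)=0$. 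Since $M$ is a $(\rho-1)\times(\rho-1)$ matrix,
\[
\det M=(-4)^{\rho-1}\det\bigl(g-\tfrac12 aa^{T}\bigr)=\tfrac12(-4)^{\rho-1}\det G,
\]
which is the claimed formula. Neither determinant depends on the chosen bases, since changes of basis are unimodular.

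The computation itself is routine. The main obstacle is getting the normalizations right: the factor $\frac12$ in $q_{\A}$ versus the intersection product, and the bilinear-form convention for $\det$. I would check these against the product case using Theorem~\ref{mainisomorphism}, Lemma~28 of \cite{KaniESCI} and Lemma~\ref{fromRHItodegreemap}. There $\theta=\mathcal{D}(1,1,0)$ and $q_{(\A,\theta)}\sim x^2+4q_{E_1,E_2}$. The hyperbolic plane has determinant $-1$, so the formula predicts $\det M=2\cdot4^{r}\det\beta_d$, and this agrees with the right-hand side.
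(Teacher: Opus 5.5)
Your proof is correct and complete. The paper itself gives no proof of this proposition; it quotes it from Kani's Lemma~30, so there is no in-paper argument to match. Your Schur-complement computation is a self-contained proof: the Gram matrix $M=-4\bigl(g-\tfrac12 aa^{T}\bigr)$ is $-4$ times the Schur complement of the entry $(\theta\cdot\theta)=2$ in $G$.

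You are right that the normalization is the real content. With the Gram matrix of $\beta_q$ as the convention, the formula holds as stated. With the ``half'' convention $q(x)=x^{t}Ax$, the factor $\frac12$ would disappear and the formula would read $(-4)^{\rho-1}\det$. The paper tacitly uses your convention, as \eqref{eq:29p2} and Algorithm~\ref{alg:RHI} show: $\det(q_{E\times E})=-p^2$ yields $\frac12(-4)^5(-p^2)=2^9p^2$, which equals the Gram determinant $2\cdot 4^4p^2$ of $x^2\perp 4q_4$.

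A common alternative route passes through the sublattice $\Z\theta\perp\theta^{\perp}$, on which $q_{(\A,\theta)}$ restricts to $-4q_{\A}$. That route needs the index $[\NS(\A):\Z\theta+\theta^{\perp}]$, which is $1$ or $2$, to cancel in the determinant bookkeeping. It equals the index of the image of $\theta^{\perp}$ in $\NS(\A,\theta)$, so it does cancel. Your choice of a basis containing $\theta$ sidesteps this entirely.

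One small simplification: proving primitivity of $\theta$ does not need Riemann--Roch. From $k^2(D\cdot D)=2$ with $(D\cdot D)\in\Z$ you get $k^2\mid 2$, hence $k=1$.
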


\subsubsection{Irreducibility criterion.}
One of the useful properties of a refined Humbert invariant $q_{(\A,\theta)}$ of $(\A,\theta)$ is the following irreducibility criterion.
\begin{definition}\cite[Satz 2]{Weilsatz}\label{defn:reducpolar}
A polarization $\theta \in \mathcal{P}(\A)$ is called \emph{reducible} (or \emph{decomposable}) if $\theta  =
\cl(E_1 + E_2)$ for some elliptic curves $E_1$ and $E_2$ on $\A$. The set of all reducible polarizations is denoted by $ \mathcal{P}(\A)^{\red}$.
\end{definition}
\begin{proposition}\emph{\cite[Proposition 6]{KaniMJ}} \label{irreducibilitycriterion}
Let $q_{(\A,\theta)}$ the refined Humbert invariant  of the principally polarized abelian surface $(\A,\theta)$, then we have that
\begin{equation} 
 \theta \text{  is reducible if and only if } q_{(\A,\theta)} \text{ represents } 1.
\end{equation}
\end{proposition}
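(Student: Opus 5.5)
We prove the two implications separately. The tool for both is the value of the refined Humbert invariant on the class of an elliptic curve $E \subset \A$, together with the Hodge index theorem on the surface $\A$.

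\emph{Reducible $\Rightarrow$ represents $1$.} Suppose $\theta = \cl(E_1 + E_2)$ with $E_1, E_2$ elliptic curves on $\A$. Since $E_i$ is a smooth genus-one curve on an abelian surface, adjunction gives $(E_i \cdot E_i) = 0$. Then $2 = (\theta\cdot\theta) = 2(E_1\cdot E_2)$, so $(E_1 \cdot E_2) = 1$. Take $D = E_1$. We get $(D\cdot\theta) = (E_1\cdot E_1)+(E_1\cdot E_2) = 1$ and $(D\cdot D)=0$, hence $q_{(\A,\theta)}(\cl(E_1)) = 1^2 - 0 = 1$.

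\emph{Represents $1$ $\Rightarrow$ reducible.} Suppose $q_{(\A,\theta)}(D) = 1$ for some $D$, so $(D\cdot\theta)^2 - 2(D\cdot D) = 1$. Write $d = (D\cdot\theta)$. Then $d$ is odd and $(D\cdot D) = (d^2-1)/2$.

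First we normalize $D$. Replacing $D$ by $D + n\theta$ changes $d$ by $2n$ and leaves $q$ unchanged, so we may assume $d = 1$ (replacing $D$ by $-D$ if needed), and then $(D\cdot D) = 0$. Put $E' = \theta - D$. Then $(E'\cdot E') = 2 - 2 + 0 = 0$ and $(E'\cdot\theta) = 2 - 1 = 1$. So $\theta = D + E'$, where both classes have self-intersection $0$, degree $1$ against the ample class $\theta$, and $(D\cdot E') = 1$.

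Next we show $D$ is effective. Riemann--Roch on an abelian surface gives $\chi(L) = (L\cdot L)/2 = 0$ for $L = \mathcal{O}(D)$, which alone is not enough. Instead we use the structure of classes with $(D\cdot D)=0$ and $(D\cdot\theta) > 0$ on an abelian surface. Such a class is nef-boundary: by Hodge index it lies on the boundary of the positive cone, on the same side as $\theta$. For abelian surfaces the nef cone equals the closure of the positive cone, so $D$ is nef. A nef, nonzero class with $(D\cdot D) = 0$ on an abelian surface is, up to a positive integer multiple, the class of an elliptic curve $F$: the line bundle $L$ has $K(L)$ of positive dimension, and the connected component $F$ of $K(L)$ gives the fibration $\A \to \A/F$ with $D \equiv m F$. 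Then $1 = (D\cdot\theta) = m(F\cdot\theta)$ forces $m = 1$, so $D = \cl(F)$.

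The same argument applies to $E'$, giving an elliptic curve $F'$ with $E' = \cl(F')$. Hence $\theta = \cl(F + F')$ is reducible.

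The main obstacle is the step from ``$(D\cdot D) = 0$ and $(D\cdot\theta) = 1$'' to ``$D$ is the class of an elliptic curve.'' I would first try the standard fact on abelian surfaces: a line bundle $L$ with $\chi(L) = 0$ that is not numerically trivial has $K(L)^0$ equal to an elliptic curve $F$, and $L$ is numerically a multiple of $F$. The sign is fixed by $(D\cdot\theta) > 0$, and primitivity follows from $(D\cdot\theta) = 1$. This argument is valid in any characteristic, including the superspecial case, via Mumford's theory of $K(L)$.
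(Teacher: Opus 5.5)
Your argument is correct. The paper gives no proof of its own; it imports the statement from Kani. There it follows from his correspondence \cite{Kani94} between elliptic subgroups $F\subset\A$ with $(F\cdot\theta)=n$ and primitive classes of $\NS(\A,\theta)$ on which $q_{(\A,\theta)}$ takes the value $n^2$. One combines this with the observation that $\theta$ is reducible exactly when some elliptic curve has $(F\cdot\theta)=1$.

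What you wrote is a self-contained proof of the case $n=1$ of that correspondence. Your shift to $(D\cdot\theta)=1$, $(D\cdot D)=0$ and the splitting $\theta=D+(\theta-D)$ are precisely what the citation packages. The citation buys the statement for all $n$; the same mechanism gives the ``primitively represents $N^2$'' clause of Theorem~\ref{thm:detRHI}. Your version makes visible that the only nontrivial input is a descent lemma, which you should state and cite precisely.

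The lemma is as follows. If $L\not\equiv 0$ and $\chi(L)=0$, put $F=(K(L)^0)_{\mathrm{red}}$ and $\pi\colon\A\to\A/F$; the reduction is needed because in characteristic $p$ the group scheme $K(L)$ need not be reduced. Then $L\otimes P\cong\pi^*M$ for some $P\in\Pic^0(\A)$. The reason is that the theta group of $L$ over $F$ is commutative, and its extension class can be killed by twisting with $\Pic^0(\A)$. This holds in every characteristic.

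This lemma is what makes $m=\deg M$ an \emph{integer}. Knowing only $(D\cdot F)=0$ and the Hodge index theorem would give $D\in\Q F$, i.e.\ $F=(F\cdot\theta)D$ in $\NS(\A)$. Ruling out $(F\cdot\theta)>1$ would then need primitivity of $\cl(F)$, which is itself proved by the same descent.

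Conversely, your nef-cone and Hodge-index paragraph is unnecessary. Once $D\equiv mF$ with $m\in\Z$, the equation $m(F\cdot\theta)=1$ with $(F\cdot\theta)\ge 1$ forces $m=1$ with no positivity assumption on $D$.
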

\noindent
Proposition~\ref{irreducibilitycriterion} above allows us to decide whether a principally polarized abelian surface $\A$ is Jacobian of a curve  $(\mathcal{J(C)},\theta_\mathcal{C})$, or a product of two elliptic curves $(E_1\times E_2,\theta_{E_1\times E_2})$.
\subsubsection{A necessary condition.} For a generic principally polarized abelian surface $(\A, \theta)$, there exists a necessary condition for an integral quadratic form appearing as a refined Humbert invariant as follows. 
\begin{theorem}\emph{\cite[Theorem 3.1.1]{harunkirthesis}}

If an integral quadratic form $f$ is equivalent to a refined Humbert invariant $q_{(\A,\theta)}$ for some principally polarized abelian surface $(\A, \theta)/K$, then $f \equiv 0, 1 \pmod 4$.
\end{theorem}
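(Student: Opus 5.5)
The claim is that every value $f(D)$ of the refined Humbert invariant is congruent to $0$ or $1$ modulo $4$, i.e.\ the form only represents such integers. The plan is to work directly from Definition~\ref{RHIdefn}: for any $D\in\NS(\A)$ we have
\[
q_{(\A,\theta)}(D) = (D\cdot\theta)^2 - 2(D\cdot D).
\]
Since $f$ is equivalent to $q_{(\A,\theta)}$, it represents exactly the same set of integers. It therefore suffices to show that each value $q_{(\A,\theta)}(D)$ is $\equiv 0,1 \pmod 4$.

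The key input is that the intersection form on an abelian surface is even: $(D\cdot D)\in 2\Z$ for every divisor $D$. This is exactly why Definition~\ref{intersectionform} can set $q_{\A}(D)=\tfrac12(D\cdot D)$ and obtain an \emph{integral} form. By Riemann--Roch on an abelian surface, $\chi(\mathcal{O}(D)) = \tfrac12(D\cdot D)$, and this is an integer. For products $E_1\times E_2$ the evenness can also be read off directly from Theorem~\ref{mainisomorphism}: there $(\mathcal{D}(a,b,\varphi)\cdot\mathcal{D}(a,b,\varphi)) = 2(ab-\deg\varphi)$.

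With evenness in hand, we get $2(D\cdot D)\equiv 0\pmod 4$. Hence
\[
q_{(\A,\theta)}(D)\equiv (D\cdot\theta)^2 \pmod 4.
\]
A square of an integer is $0$ or $1$ modulo $4$, so $q_{(\A,\theta)}(D)\equiv 0,1\pmod 4$, and the same holds for every value of $f$.

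The only substantive step is the evenness of self-intersection. I would justify it via Riemann--Roch, where the Euler characteristic is an integer, noting that $\chi(\mathcal{O}_\A)=0$ and that the canonical class of an abelian surface is trivial. If the statement is instead read as a congruence of forms, meaning every coefficient matrix entry or the form's discriminant, the same computation gives it: polarizing shows that $f(x)\equiv L(x)^2 \pmod 4$ for the linear form $L(D)=(D\cdot\theta)$, which is the standard meaning of ``$f\equiv 0,1 \pmod 4$'' for forms.
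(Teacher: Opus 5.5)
Your argument is correct. Evenness of $(D\cdot D)$ on an abelian surface (Riemann--Roch with $K_{\A}=0$ and $\chi(\mathcal{O}_{\A})=0$) gives $q_{(\A,\theta)}(D)\equiv (D\cdot\theta)^2 \pmod 4$, and $\Z$-equivalence transfers this to $f$. The paper gives no proof of its own, only the citation, and this congruence is the standard argument in Kani's work underlying that result.

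One small point: drop the aside about ``coefficient matrix entries or the discriminant''. The meaningful form-level version is simply $f(x)\equiv L(Tx)^2 \pmod 4$, with $L(D)=(D\cdot\theta)$ and $T$ the change of basis, which your computation already yields.
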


\subsubsection{An equivalent definition.} Later, Kani generalized the definition of a refined Humbert invariant in \cite[Remark 16]{KaniESCI} as follows.
\begin{definition}\label{productpolar}    
Let $\A$ be an abelian surface over a field $K$. If $\A$ has a principal polarization $\theta :\A \rightarrow \hat{\A}$ defined over $K$,
then we define the additive subgroup $\End_{\theta}(\A)$ of the ring $\End(\A) = \End_{K}(\A)$ of $K$-endomorphisms of $\A$ by:
$$\End_{\theta}(\A) = \{\mu \in \End(\A) :\hat{\mu} \circ \theta = \theta \circ \mu \} = \{\mu \in \End(\A) :\mu= \mu' \},$$
where $\mu'= r_{\theta}(\mu) \coloneqq \theta^{-1}\circ \hat{\mu} \circ \theta$. Thus, $\End_{\theta}(\A)$ consists of those endomorphisms
which are symmetric with respect to the Rosati involution $r_{\theta}$ defined by $\theta$.
\end{definition}
\begin{proposition} \emph{\cite [Proposition 14, Remark 16]{KaniESCI}}\label{RHInewdefn}
Let $(\A,\theta)$ be a principally polarized abelian surface over a field $K$ and let $q_{(\A,\theta)}$ be a refined Humbert invariant of $(\A,\theta)$.
Then for every $\mu \in \End_{\theta}(\A)$, we have that 
\[q_{(\A,\theta)}(\mu) = \tr(\mu^2) - \frac{1}{4}(\tr(\mu))^2\]
where  $\tr$ is the usual rational trace of an endomorphism as defined in \emph{\cite[p.182]{mumford1970Abelian}}. 
\end{proposition}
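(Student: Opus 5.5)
The plan is to identify $\NS(\A)$ with the Rosati-symmetric endomorphisms and then compare the two formulas by a purely linear-algebraic computation in $\End(\A)\otimes\Q$.

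\textbf{Step 1: the identification.} Use the standard isomorphism
\[
\NS(\A)\xrightarrow{\ \sim\ }\End_{\theta}(\A), \qquad D\mapsto \mu_D=\theta^{-1}\circ\phi_D .
\]
Here $\phi_D:\A\to\hat\A$ is the homomorphism attached to the line bundle of $D$ (Mumford, \S 20--21). Symmetry of $\phi_D$ gives $\mu_D'=\mu_D$, so $\mu_D$ lies in $\End_\theta(\A)$ of Definition~\ref{productpolar}, and every symmetric endomorphism arises in this way. Under this identification $\theta$ corresponds to $1_\A$. It follows that $\NS(\A,\theta)\cong \End_\theta(\A)/\Z\cdot 1$, which explains why the formula must be invariant under $\mu\mapsto\mu+n$.

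\textbf{Step 2: intersection numbers as traces.} Express intersection numbers through the characteristic polynomial of $\mu_D$. Riemann--Roch on an abelian surface gives $\chi(nD+m\theta)=\tfrac12((nD+m\theta)^2)$. On the other hand, $\chi(L)^2=\deg\phi_L$ and $\deg(\theta\circ\nu)=\deg\nu$. Combining these,
\[
\tfrac14\big((nD+m\theta)^2\big)^2=\deg(n\mu_D+m)=P_{\mu_D}(-m/n)\,n^4 ,
\]
where $P_{\mu_D}$ is the degree-$4$ characteristic polynomial. Since $\mu_D$ is symmetric, this polynomial is the square of a quadratic $x^2-sx+t$ (the analytic characteristic polynomial, of degree $2g=4$). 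Comparing coefficients, using $(\theta^2)=2$, yields
\[
(D\cdot\theta)=s=\tfrac12\tr(\mu_D), \qquad (D\cdot D)=2t .
\]
The sign is pinned down by $D=\theta$, which gives $s=2$.

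\textbf{Step 3: comparison.} With $\mu=\mu_D$, the eigenvalues of $\mu$ are $\alpha,\beta$, each with multiplicity two. Hence $\tr(\mu)=2(\alpha+\beta)=2s$ and $\tr(\mu^2)=2(\alpha^2+\beta^2)=2(s^2-2t)$. Therefore
\[
\tr(\mu^2)-\tfrac14\tr(\mu)^2 = 2s^2-4t-s^2 = s^2-4t=(D\cdot\theta)^2-2(D\cdot D),
\]
which is exactly $q_{(\A,\theta)}(D)$ from Definition~\ref{RHIdefn}.

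\textbf{Main obstacle.} The main difficulty is Step 2. It requires the facts that a Rosati-symmetric endomorphism of a principally polarized surface has characteristic polynomial a perfect square, and that Riemann--Roch holds in characteristic $p$. In the supersingular case the analytic representation is unavailable, so I would argue via the $\ell$-adic Tate module: the Weil pairing makes $\mu$ self-adjoint with respect to a symplectic form, which forces each eigenvalue to have even multiplicity. Alternatively, one can directly use Mumford's result $\deg(\phi_L)=\chi(L)^2$ together with the polynomiality of $\chi(nD+m\theta)$ in $n,m$.
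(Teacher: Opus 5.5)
Your argument is correct. The paper gives no proof of this statement; it is quoted from Kani. What you have written is a standard self-contained derivation. You identify $\NS(\A)$ with $\End_\theta(\A)$ via $D\mapsto\theta^{-1}\phi_D$, read off the characteristic polynomial of $\mu_D$ from Riemann--Roch and $\deg\phi_L=\chi(L)^2$, and then compute $\tr(\mu^2)-\tfrac14\tr(\mu)^2=(\alpha-\beta)^2=s^2-4t$. This checks out: it gives $0$ for $\mu=1$, and $(a-b)^2+4\deg\varphi$ in the product case, matching Lemma~\ref{fromRHItodegreemap}.

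What you flag as the main obstacle is not one. The identity you already wrote in Step~2 reads
\[
n^4P_{\mu_D}(-m/n)=\deg(n\mu_D+m)=\chi(nD+m\theta)^2=\Bigl(m^2+(D\cdot\theta)\,mn+\tfrac12(D\cdot D)\,n^2\Bigr)^2 .
\]
Setting $n=1$ and $x=-m$ gives directly $P_{\mu_D}(x)=\bigl(x^2-(D\cdot\theta)x+\tfrac12(D\cdot D)\bigr)^2$. So the perfect-square property is an output of the computation, not an extra input. The coefficients, including the sign, are fixed by comparing $m^2$-coefficients, so there is no need to test $D=\theta$.

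Your Tate-module argument is also valid, but it is superfluous. Generalized eigenspaces of a Rosati-symmetric $\mu$ for distinct eigenvalues are $e_\ell^\theta$-orthogonal, so the alternating form restricts nondegenerately to each one, forcing even dimension. In addition, Mumford's Riemann--Roch ($\chi(L)=\tfrac12(L\cdot L)$ and $\chi(L)^2=\deg\phi_L$) holds in every characteristic, so nothing special happens in the supersingular case.

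The one point to add is the case of a non-closed field $K$: you should work over $\overline{K}$. Traces and intersection numbers are unchanged by base change. The surjectivity of $D\mapsto\theta^{-1}\phi_D$ onto the symmetric endomorphisms, used in Step~1, is a statement over $\overline{K}$. The proposition implicitly presupposes it, since otherwise $q_{(\A,\theta)}(\mu)$ is not defined.
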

 There exists a close relation between the refined Humbert invariant  $q_{(\A,\theta)}$  $(\A,\theta)$ and the degree map $q_{E_1, E_2}$ when $\A = E_1\times E_2$ is the product surface, as follows.
\begin{lemma} \emph{\cite [Lemma 21]{KaniESCII}} \label{fromRHItodegreemap}
Let $E_1/K$ and $E_2/K$ be elliptic curves over an arbitrary field $K$, and let $\A = E_1\times E_2$ be the product surface with the product polarization $\theta_{\A} =\theta_{E_1} \otimes \theta_{E_2}$. For $a,b\in \Z$, and $\varphi \in \Hom(E_1,E_2)$, then
\begin{equation}\label{RHI=x2+4}
q_{(\A,\theta_{\A})}(\mathcal{D}(a, b, \varphi)) = (a-b)^2+4q_{E_1,E_2}(\varphi),
\end{equation}
$q_{E_1,E_2}$ denotes the degree map on $\Hom(E_1, E_2)$.
\end{lemma}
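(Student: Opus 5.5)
We compute directly from Definition~\ref{RHIdefn}, using the explicit intersection formulas of Theorem~\ref{mainisomorphism}. The first task is to identify the product polarization inside the parametrization $\mathcal{D}$. By definition $\theta_{\A}$ is the class of $E_1\times\{0\}+\{0\}\times E_2$, which is $\theta_1+\theta_2$ with $\theta_i=p_i^{*}(0_{E_i})$. Since $\Gamma_{-0}=\Gamma_0=E_1\times\{0\}=\theta_2$, we get
\[
\mathcal{D}(1,1,0)=0\cdot\theta_1+(1-0)\theta_2+\Gamma_0=\theta_1+\theta_2=\theta_{\A}.
\]
As a consistency check, $1\cdot 1-\deg(0)=1$ and $a=1>0$, matching Corollary~\ref{polarizationcondition}. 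If one is worried about the identification $\Gamma_0=\theta_2$, one can avoid it and simply use bilinearity of \eqref{intersectionoftwodivisors} with the pair $(a',b',\varphi')=(1,1,0)$.

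Next, take $D=\mathcal{D}(a,b,\varphi)$ and evaluate the two terms of $q_{(\A,\theta_{\A})}(D)=(D\cdot\theta_{\A})^2-2(D\cdot D)$. By \eqref{intersectionoftwodivisors} with $D_2=\mathcal{D}(1,1,0)$, we get $(D\cdot\theta_{\A})=a\cdot1+1\cdot b-\beta_d(\varphi,0)=a+b$. Equivalently, use the second displayed formula with $x=y=1$, which gives $bx+ay=a+b$. By the first displayed formula, $(D\cdot D)=2(ab-\deg\varphi)$. Therefore
\[
q_{(\A,\theta_{\A})}(D)=(a+b)^2-4ab+4\deg\varphi=(a-b)^2+4q_{E_1,E_2}(\varphi),
\]
using $\deg\varphi=q_{E_1,E_2}(\varphi)$ from Definition~\ref{defn:degreemap}.

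The only step needing care is the first: confirming that the product polarization corresponds exactly to the parameters $(1,1,0)$ under the normalization of $\mathcal{D}$, with the shifts $a-1$ and $b-\deg\varphi$ and the graph of $-\varphi$. Once that is settled, the rest is algebra. Finally, well-definedness on the quotient $\NS(\A)/\Z\theta_{\A}$ is consistent with the formula. Adding $n\theta_{\A}=\mathcal{D}(n+1,n+1,0)-\mathcal{D}(1,1,0)$ shifts $(a,b,\varphi)$ to $(a+n,b+n,\varphi)$ by additivity of $\mathcal{D}$, and this leaves both $a-b$ and $\varphi$ unchanged.
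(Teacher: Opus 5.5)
Your plan is the standard one. The paper gives no proof of its own (it cites Kani), and the intended argument is exactly this evaluation of $(D\cdot\theta_{\A})^2-2(D\cdot D)$ via Theorem~\ref{mainisomorphism}. Your final algebra $(a+b)^2-4(ab-\deg\varphi)=(a-b)^2+4\deg\varphi$ is correct.

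However, the step you yourself single out as the delicate one is wrong as written. With your identification $\Gamma_0=E_1\times\{0\}=\theta_2$, the formula gives $\mathcal{D}(1,1,0)=0\cdot\theta_1+\theta_2+\Gamma_0=2\theta_2$, not $\theta_1+\theta_2$. Indeed $2\theta_2$ has self-intersection $0$, whereas Theorem~\ref{mainisomorphism} says $\mathcal{D}(1,1,0)$ has self-intersection $2$. The cause is that Theorem~\ref{mainisomorphism} as printed is self-consistent only if $\theta_1$ is the class of $E_1\times\{0\}$. Additivity of $\mathcal{D}$ forces $0=\mathcal{D}(0,0,0)=-\theta_1+\Gamma_0$, i.e.\ $\Gamma_0=\theta_1$ in $\NS(\A)$. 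That contradicts the literal reading $\theta_i=p_i^{*}(0_{E_i})$ you used. Your suggested workaround is circular: applying \eqref{intersectionoftwodivisors} with $(a',b',\varphi')=(1,1,0)$ computes $(D\cdot\mathcal{D}(1,1,0))$, which equals $(D\cdot\theta_{\A})$ only once $\mathcal{D}(1,1,0)=\theta_{\A}$ is known.

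The repair uses only things already in your write-up. One option is to bypass the identification with your ``equivalently'' route. By definition of the product polarization, $\theta_{\A}=\theta_1+\theta_2$, whichever factor is labelled $\theta_1$. So $(D\cdot\theta_{\A})=b\cdot 1+a\cdot 1=a+b$ follows directly from the second displayed formula of Theorem~\ref{mainisomorphism}, and $(D\cdot D)=2(ab-\deg\varphi)$ from the first. The other option obtains $\mathcal{D}(1,1,0)=\theta_{\A}$ by additivity without naming $\Gamma_0$: $\mathcal{D}(1,1,0)=\mathcal{D}(1,1,0)-\mathcal{D}(0,0,0)=\theta_1+\theta_2$. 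With either fix the proof is complete. Your closing remark on compatibility with the quotient by $\Z\theta_{\A}$ is fine as stated, since $\mathcal{D}(n+1,n+1,0)-\mathcal{D}(1,1,0)=n(\theta_1+\theta_2)$ follows directly from the formula for $\mathcal{D}$.
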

More general statements can be given related to refined Humbert invariants by using elliptic subcovers and isogeny defects as follows.

\begin{definition}
Let $\mathcal{C}/K$ be a curve of genus $2$. The presentation of $\mathcal{C}/K$ of degree $N$ is the triple $(E, E', \psi)$  which arises from a given elliptic subcover $F:\mathcal{C} \rightarrow E$ of degree $N$. This triple consists of $E$, another (isogenous) elliptic curve $E'/K$, and an
isomorphism $\psi: E[N] \rightarrow E'[N] $ which is an anti-isometry with respect to the  Weil pairing $e_N$. 
\end{definition}

\begin{definition}\label{defect}
Attached to $\psi$, an invariant, \emph{the isogeny defect} $m_{\psi}$ is defined as
\[m_{\psi} \coloneqq \min\{m \geq 1 :[m]\circ \psi = \varphi |_{E[N]} \text{ for some } \varphi \in \Hom(E, E')\}.\]
\end{definition}

\begin{theorem}\emph{\cite[Theorem 3]{KaniESCI}}\label{thm:detRHI}
If $\mathcal{C}/K$ has a presentation $(E, E',\psi)$ of degree $N$ with $\charec(K) \nmid N$ an isogeny
defect $m = m_\psi$, and if $r = \rank(\Hom(E, E'
))\geq 1$, then the refined Humbert invariant $q_{\mathcal{C}}$ is
a positive definite quadratic form of rank $n=r+1$, which satisfies properties
\begin{itemize}
    \item [(i)]$\det(q_{\mathcal{C}}) = 2^{2r+1}m^2\det(q_{E,E'})$.
    \item [(ii)] $q_{\mathcal{C}}$ primitively represents $N^2$.
    \item [(iii)] $q_{\mathcal{C}}(x_1,\dots,x_{r+1}) \equiv 0, 1 \pmod  4$, for all  $x_1,\dots,x_{r+1}\in \Z$.
    \item [(iv)] $q_{\mathcal{C}}(x_1,\dots,x_{r+1}) \neq 1$ for any  $x_1,\dots,x_{r+1}\in \Z$.
\end{itemize}
\end{theorem}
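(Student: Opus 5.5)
The plan is to realize $\mathcal{C}$ concretely on a product surface and reduce all four claims to the formula of Lemma~\ref{fromRHItodegreemap} together with the determinant relations already recorded (Proposition~\ref{prop:detNS} and the lemma on $\det(\NS(E_1\times E_2))$). By the standard Frey--Kani construction, the presentation $(E,E',\psi)$ of degree $N$ says exactly that $J_{\mathcal{C}}$ is isomorphic to $(E\times E')/\Gamma_\psi$, where $\Gamma_\psi\subset E[N]\times E'[N]$ is the graph of $\psi$. Moreover, the principal polarization $\theta_{\mathcal{C}}$ pulls back under $\pi\colon E\times E'\to J_{\mathcal{C}}$ to $N\theta_{E\times E'}$, since $\Gamma_\psi$ is maximal isotropic for $e_N$ precisely because $\psi$ is an anti-isometry. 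Therefore $\pi^*$ embeds $\NS(J_{\mathcal{C}})$ into $\NS(E\times E')$ as a finite-index sublattice, it multiplies intersection numbers by $\deg\pi=N^2$, and it carries $\theta_{\mathcal{C}}$ to $N\theta$. It follows that $q_{\mathcal{C}}(D)=(\pi^*D\cdot N\theta)^2/N^2-2(\pi^*D)^2/N^2$. Up to the factor bookkeeping, this equals $q_{(E\times E',\theta)}(\pi^*D)/N^2$ with a scaled polarization. The rank statement $n=r+1$ is then immediate, since $\rho(J_{\mathcal{C}})=\rho(E\times E')=r+2$.

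Property (iii) is the necessary condition of the preceding theorem (Kir's thesis), applied to $(J_{\mathcal{C}},\theta_{\mathcal{C}})$, so it can be cited directly. For (iv), note that $\theta_{\mathcal{C}}$ is the theta divisor of a smooth genus-$2$ curve and hence irreducible. Proposition~\ref{irreducibilitycriterion} then shows that $q_{\mathcal{C}}$ does not represent $1$. For (ii), take the elliptic subcover $F\colon \mathcal{C}\to E$ of degree $N$ and let $D=F^*(0_E)$, or equivalently the image of $E$ in $J_{\mathcal{C}}$. Then $(D\cdot D)=0$ and $(D\cdot\theta_{\mathcal{C}})=N$, so $q_{\mathcal{C}}(D)=N^2$. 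The class of $D$ is primitive modulo $\Z\theta_{\mathcal{C}}$ because $E\hookrightarrow J_{\mathcal{C}}$ is the normalized embedding: $F$ is minimal, that is, it does not factor through an isogeny.

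The main work is (i). I would first use Proposition~\ref{prop:detNS} to write $\det q_{\mathcal{C}}=\tfrac12(-4)^{r+1}\det\NS(J_{\mathcal{C}})$, and then relate this to $\det\NS(E\times E')=(-1)^{r+1}\det(\Hom(E,E'),\beta_d)$. Since $\det(\Hom,\beta_d)=2^r\det q_{E,E'}$, the target reduces to showing $|\det\NS(J_{\mathcal{C}})|=m^2|\det\NS(E\times E')|$. Under $\pi^*$ the form is scaled by $N^2$ on a sublattice of index $[\NS(E\times E'):\pi^*\NS(J_{\mathcal{C}})]$. Hence it must be shown that this index equals $N^{r+2}/m$. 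The index is computed with the description in Theorem~\ref{mainisomorphism}: a class $\mathcal{D}(a,b,\varphi)$ descends to $J_{\mathcal{C}}$ if and only if the associated map $E\times E'\to \widehat{E\times E'}$ kills $\Gamma_\psi$. Concretely, this is a congruence condition saying that $a,b$ and $\varphi|_{E[N]}$ are compatible with $\psi$ modulo $N$.

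The hard part is counting this subgroup, and it is exactly where the isogeny defect $m_\psi$ from Definition~\ref{defect} enters. The condition amounts to requiring that $\varphi|_{E[N]}$ be an integer multiple of $\psi$ modulo $N$, together with matching conditions on $a$ and $b$. The set of $k\in\Z/N$ for which $k\psi$ is realized by some $\varphi$ is the subgroup generated by $m$, and this accounts for the factor $m$. I expect the bookkeeping of this index, including the consistency of the $2$-power factors, to be the main obstacle.
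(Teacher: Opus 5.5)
The paper gives no proof of this statement; it is quoted from Kani. Your argument therefore has to stand on its own. The rank count, (iii) and (iv) are fine. So is the strategy of reducing (i) to a comparison of $\det\NS(J_{\mathcal{C}})$ with $\det\NS(E\times E')$ through $\pi^*$. But the step that carries the content of (i) fails in two ways.

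\textbf{The index target is inverted.} Put $I=[\NS(E\times E'):\pi^*\NS(J_{\mathcal{C}})]$. Then $I^2\det\NS(E\times E')=\det\pi^*\NS(J_{\mathcal{C}})=N^{2(r+2)}\det\NS(J_{\mathcal{C}})$. So $|\det\NS(J_{\mathcal{C}})|=m^2|\det\NS(E\times E')|$ is equivalent to $I=mN^{r+2}$, whereas your value $N^{r+2}/m$ would give a factor $m^{-2}$. As a sanity check (with $r=0$, outside the theorem's range): if $\Hom(E,E')=0$ then $m=N$, and $\pi^*\NS(J_{\mathcal{C}})=\langle N(\theta_1+\theta_2),N^2\theta_2\rangle$ has index $N^3$. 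Separately, in the paper's normalization (determinants of Gram matrices of the bilinear forms, as in $\det(g)=p^2$ and $2^9p^2$) one has $\det(\Hom(E,E'),\beta_d)=\det(q_{E,E'})$, with no factor $2^r$. With that convention the powers of $2$ come out exactly, so they are not the real obstacle.

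\textbf{The descent criterion is only necessary, not sufficient.} This is the more serious problem. By Mumford's descent theorem (this is where $\charec(K)\nmid N$ enters), a class descends along $\pi$ if and only if two things hold: its homomorphism kills $\Gamma_\psi$, \emph{and} its commutator pairing $e^{L}$ is trivial on $\Gamma_\psi\times\Gamma_\psi$. The second is a level-$N^2$ condition, not a congruence modulo $N$. For example, $N\theta_2$ kills $\Gamma_\psi$. But $N\theta_2=\pi^*D$ would give $(D\cdot D)=0$ and $(D\cdot\theta_{\mathcal{C}})=1$, i.e.\ $q_{\mathcal{C}}(D)=1$, contradicting (iv).

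Counting only your mod-$N$ conditions ($\varphi|_{E[N]}=a\psi$ and $a+b\equiv 0\pmod N$, up to the sign and ordering conventions of $\mathcal{D}$) gives index exactly $mN^{r+1}$. The factor $m$ does arise as you predict, since $\{k\in\Z : k\psi\in\Hom(E,E')|_{E[N]}\}=m\Z$, but one factor of $N$ is missing. What closes the gap:
\begin{itemize}
\item The map $D\mapsto e^{D}|_{\Gamma_\psi\times\Gamma_\psi}$ is a homomorphism from this sublattice to the alternating pairings on $\Gamma_\psi\cong(\Z/N\Z)^2$, a cyclic group of order $N$.
\item It is surjective, because the pairing of $N\theta_2$ on $\Gamma_\psi$ is, up to sign, the Weil pairing $e_N$ transported by a coordinate projection, hence a generator.
\item Its kernel is exactly $\pi^*\NS(J_{\mathcal{C}})$.
\end{itemize}
This gives $I=mN^{r+2}$, and then (i) follows.

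\textbf{Two smaller points.} In (ii), minimality of $F$ only makes $E\to J_{\mathcal{C}}$ an embedding with $(E_0\cdot\theta_{\mathcal{C}})=N$. Primitivity modulo $\Z\theta_{\mathcal{C}}$ also needs two facts. First, the class of an elliptic curve is primitive in $\NS(J_{\mathcal{C}})$. Second, $E_0\equiv kD+n\theta_{\mathcal{C}}$ together with $(E_0\cdot E_0)=0$ forces $\ell\mid n$ for each prime $\ell\mid k$, which would make $E_0$ divisible by $\ell$. Finally, the first term of your formula for $q_{\mathcal{C}}(D)$ should have $N^4$, not $N^2$, in the denominator.
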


\begin{theorem}\emph{\cite[Theorem 4]{KaniESCI}}\label{defectis1}
If $(E, E',\psi)$ is a presentation of the degree $N$ of a curve $\mathcal{C}/K$ of genus $2$ with $\charec(K)\nmid N$, then $m_\psi = 1$ if and only if $\mathcal{J(C)} \cong E\times E'$.
\end{theorem}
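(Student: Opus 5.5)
Theorem~\ref{defectis1} is cited from \cite{KaniESCI}. My plan is to prove it through the geometry of the presentation, using the refined Humbert invariant as the bookkeeping device.

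\textbf{Setup.} A presentation $(E,E',\psi)$ of degree $N$ arises from a minimal elliptic subcover $F\colon\mathcal{C}\to E$. It gives an isogeny $E\times E'\to \mathcal{J(C)}$ whose kernel is the graph $\Gamma_\psi\subset E[N]\times E'[N]$. Equivalently, $\mathcal{J(C)}\cong (E\times E')/\Gamma_\psi$, and the pullback of $\theta_{\mathcal{C}}$ is $N$ times the product polarization. Both directions will be read off from this quotient description.

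\textbf{The direction $m_\psi=1\Rightarrow \mathcal{J(C)}\cong E\times E'$.} Suppose $\psi=\varphi|_{E[N]}$ for some $\varphi\in\Hom(E,E')$. Consider the automorphism $\alpha=\begin{pmatrix}1&0\\ \varphi&1\end{pmatrix}$ of $E\times E'$; its inverse has $-\varphi$ in place of $\varphi$. The map $\alpha$ sends $E[N]\times 0$ onto $\Gamma_\psi$. Hence
\[(E\times E')/\Gamma_\psi\cong (E\times E')/(E[N]\times 0)\cong E\times E',\]
where the last step uses $E/E[N]\cong E$ via $[N]$.

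\textbf{The direction $\mathcal{J(C)}\cong E\times E'\Rightarrow m_\psi=1$.} This is the harder part, and I would compare determinants. Theorem~\ref{thm:detRHI}(i) gives
\[\det q_{\mathcal{C}}=2^{2r+1}m^2\det(q_{E,E'}).\]
If $\mathcal{J(C)}\cong E\times E'$, then $\NS(\mathcal{J(C)})\cong \NS(E\times E')$ as quadratic modules. So Proposition~\ref{prop:detNS} together with the determinant lemma \cite[Lemma 28]{KaniESCI} give
\[\det q_{\mathcal{C}}=\tfrac12(-4)^{\rho-1}(-1)^{\rho-1}\det(q_{E,E'})=2^{2r+1}\det(q_{E,E'}),\]
using $\rho=r+2$. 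Comparing the two expressions yields $m^2=1$, so $m=1$.

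\textbf{Main obstacle.} The determinant formula in Theorem~\ref{thm:detRHI} needs $r\ge 1$. When $r=0$, i.e.\ $E$ and $E'$ are non-isogenous, I would argue directly instead. In that case $\Hom(E,E')=0$, so $m_\psi=1$ would force $\psi=0$, which is impossible. So I must show that $\mathcal{J(C)}\not\cong E\times E'$ when $r=0$. The Picard number of $\mathcal{J(C)}$ is then $2$, and $\NS(E\times E')$ is the hyperbolic plane, which represents $0$ through $\theta_1$ and $\theta_2$. Following \cite{KaniMJ}, I would show that in $\NS(\mathcal{J(C)})$ the classes of the two elliptic subgroups have self-intersection $0$ and intersection number $N^2$. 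Hence the lattice has discriminant $-N^4$ and is not unimodular when $N>1$. It is therefore not isometric to the hyperbolic plane, whose discriminant is $-1$. Verifying this intersection number is the step I expect to require the most care.
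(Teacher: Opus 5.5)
Your argument for $m_\psi=1\Rightarrow\mathcal{J(C)}\cong E\times E'$ is correct: the shear automorphism carries $E[N]\times 0$ onto $\Gamma_\psi$. The converse for $r\ge 1$ is also correct: comparing Theorem~\ref{thm:detRHI}(i) with Proposition~\ref{prop:detNS} and \cite[Lemma 28]{KaniESCI} forces $m^2=1$. The three determinant conventions agree, as one checks on $x^2+4q_{E,E'}$. (The paper itself gives no proof and only cites \cite[Theorem 4]{KaniESCI}, so your argument has to stand on its own.)

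The gap is in the case $r=0$. Write $E_1=f^*E$ and $E_2=f'^*E'$ inside $\mathcal{J(C)}$. The values $(E_i\cdot E_i)=0$ and $(E_1\cdot E_2)=\#(E_1\cap E_2)=N^2$ are right; this is the easy step, not the delicate one. But they give the discriminant of the sublattice $\Z[E_1]+\Z[E_2]$, not of $\NS(\mathcal{J(C)})$. That sublattice is never all of $\NS(\mathcal{J(C)})$. Since $\rho=2$ and $(\theta_{\mathcal C}\cdot E_i)=N$, one has $\theta_{\mathcal C}=\frac1N([E_1]+[E_2])$, which lies in $\NS(\mathcal{J(C)})$ but not in the span when $N\ge 2$.

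No amount of intersection-number bookkeeping can repair this. The lattice $\Z\theta_{\mathcal C}+\Z\frac1N[E_2]$ contains $[E_1]$, $[E_2]$ and $\theta_{\mathcal C}$ with exactly these intersection numbers. Its Gram matrix is $\begin{pmatrix}2&1\\1&0\end{pmatrix}$, so it is the hyperbolic plane. Unimodularity therefore has to be excluded by genuinely geometric input.

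The missing input is the irreducibility of $\theta_{\mathcal C}$. If $\Hom(E,E')=0$, Corollary~\ref{polarizationcondition} gives $\mathcal{P}(E\times E')=\{\mathcal{D}(1,1,0)\}=\{\theta_1+\theta_2\}$, since $a>0$ and $ab=1$ force $a=b=1$. This polarization is reducible. An unpolarized isomorphism $\mathcal{J(C)}\cong E\times E'$ would carry $\theta_{\mathcal C}$, the class of the irreducible curve $\mathcal C$, to a principal polarization of $E\times E'$, hence to $\theta_1+\theta_2$, which is absurd. Together with your observation that $m_\psi\neq 1$ when $\Hom(E,E')=0$, this settles $r=0$.

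An equivalent fix: in the hyperbolic plane, the only isotropic classes meeting $e+f$ with intersection $N$ are $Ne$ and $Nf$. These are imprimitive, whereas the class of an elliptic curve on an abelian surface is primitive.
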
 
\noindent
The set $\mathcal{P}(\A, q)$ is defined as all the principal
polarizations $\theta \in \mathcal{P}(\A)$ which are equivalent to the refined Humbert invariant $q_{(\A,\theta)}$
$$\mathcal{P}(\A, q) = \{\theta \in \mathcal{P}(\A) :q_{(\A,\theta)} \sim q\}.$$
The set of reducible polarizations, as in Definition~\ref{defn:reducpolar}, can be written as the union of several sets of the form $\mathcal{P}(\A, q_i)$ by varying $q_i$'s in the following way.
\begin{proposition}\emph{\cite[Proposition 6]{KaniPrincipalPolarizations}} \label{prop:generasum}
If $\A = E \times E'$ is an abelian product surface, then
$$ \mathcal{P}(\A)^{\red} = \coprod_{q\in \Gen(q_{E,E'})}
\mathcal{P}(\A, x^2 \perp 4q)$$ where $\Gen(q)$ is the set of isomorphism classes of integral
quadratic forms $q$  which are genus-equivalent to the integral quadratic form $q_{E,E'}$.
\end{proposition}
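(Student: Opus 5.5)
Let $\theta \in \mathcal{P}(\A)^{\red}$. I will show that $q_{(\A,\theta)} \sim x^2 \perp 4q$ for some $q \in \Gen(q_{E,E'})$. I will also show that every such class actually occurs. Disjointness of the union is automatic, since the sets $\mathcal{P}(\A,q)$ are defined by distinct isometry classes of the refined Humbert invariant. It therefore suffices to prove two inclusions.

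\emph{Inclusion $\subseteq$.} Since $\theta$ is reducible, Definition~\ref{defn:reducpolar} gives elliptic curves $F_1,F_2 \subset \A$ with $\theta = \cl(F_1+F_2)$. From $(\theta\cdot\theta)=2$ and $(F_i\cdot F_i)=0$ we get $(F_1\cdot F_2)=1$. Hence the addition map $F_1\times F_2 \to \A$ has degree one, so it is an isomorphism. Under it, $\theta$ pulls back to the product polarization $\theta_{F_1\times F_2}$. Lemma~\ref{fromRHItodegreemap} and Theorem~\ref{mainisomorphism} then give
\[ q_{(\A,\theta)} \sim (a-b)^2 + 4q_{F_1,F_2}(\varphi). \]
After the change of variables $x=a-b$ (the $\theta$-direction is quotiented out), this is $x^2 \perp 4q_{F_1,F_2}$. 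It remains to show $q_{F_1,F_2} \in \Gen(q_{E,E'})$. Both $(\NS(\A),q_\A)$ are the same lattice, equal to $H \perp (-q_{E,E'})$ and also to $H \perp (-q_{F_1,F_2})$, where $H$ is the hyperbolic plane $xy$. Since $H$ is unimodular and indefinite, Witt cancellation holds over each $\Z_\ell$ and over $\mathbb{R}$, so $q_{E,E'}$ and $q_{F_1,F_2}$ are locally isometric everywhere, hence in the same genus. The step at $\ell=2$ needs care: I will use that cancellation of an even unimodular hyperbolic plane holds over $\Z_2$.

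\emph{Inclusion $\supseteq$ (surjectivity onto genus classes).} Let $q \in \Gen(q_{E,E'})$. Then $H\perp(-q)$ and $H\perp(-q_{E,E'})$ are in the same genus. Both are indefinite of rank $\rho \ge 3$, because $r \ge 1$. The main obstacle is to show they are isometric. I would invoke Eichler/Kneser strong approximation for the spinor genus. Since these lattices contain a hyperbolic plane, the genus has one class (Nikulin/Eichler: an indefinite lattice splitting $H$ with rank $\ge 3$ has spinor genus equal to genus, and a single class).

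So there is an isometry $\NS(\A) \cong H \perp (-q)$. Let $f_1,f_2$ be the isotropic basis of $H$ with $(f_1\cdot f_2)=1$, and set $\theta = f_1+f_2$, which has self-intersection $2$. Possibly replacing $\theta$ by $-\theta$ (composing with $-1$), I will check ampleness via the positive-cone condition, Corollary~\ref{polarizationcondition}. Then I need $f_1,f_2$ to be classes of elliptic curves. On an abelian surface, a primitive isotropic class that is nef and nonzero is represented by an elliptic curve. I would arrange nefness by applying reflections or choosing the appropriate sign chamber, which is the secondary delicate point.

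Given this, $\theta$ is reducible, and the computation of the first part gives $q_{(\A,\theta)} \sim x^2\perp 4q$. This proves $\mathcal{P}(\A,x^2\perp 4q)$ is nonempty and contained in $\mathcal{P}(\A)^{\red}$, completing the decomposition.
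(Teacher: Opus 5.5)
There is a genuine gap in your reverse inclusion. Your $\subseteq$ direction is fine; the cancellation step is valid because the lattices $H\perp(-q)$ are even.

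\textbf{The gap.} The displayed identity is an equality of sets. So $\supseteq$ means that \emph{every} $\theta$ with $q_{(\A,\theta)}\sim x^2\perp 4q$, for some $q\in\Gen(q_{E,E'})$, is reducible.

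What you actually construct is \emph{one} reducible polarization in each class. From that single element you conclude that all of $\mathcal{P}(\A,x^2\perp 4q)$ lies in $\mathcal{P}(\A)^{\red}$. This does not follow: a priori, another polarization with the same refined Humbert invariant could be irreducible.

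The missing step is short. The form $x^2\perp 4q$ represents $1$ (take $x=1$, $\varphi=0$). Hence, by Proposition~\ref{irreducibilitycriterion}, every $\theta\in\mathcal{P}(\A,x^2\perp 4q)$ is reducible.

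Once you use this, your strong-approximation and elliptic-curve-class construction is not needed for the statement as written. It only shows that each piece of the disjoint union is nonempty, which the equality does not assert. If you want nonemptiness anyway, the ``delicate'' nefness point is easy:
\begin{itemize}
\item Since $(f_1\cdot f_2)=1>0$, the classes $f_1,f_2$ lie in the same nappe of the light cone.
\item After one global sign change, both have positive intersection with an ample class.
\item Primitive isotropic classes with that property are exactly the classes of elliptic curves on $\A$ (Kani, 1994). No reflections are required.
\end{itemize}

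\textbf{Disjointness.} This needs one more sentence than you give. The pieces are indexed by $q$, not by $x^2\perp 4q$. So you must check that $x^2\perp 4q\cong x^2\perp 4q'$ implies $q\cong q'$.

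This holds because $q$ is positive definite and integral, so $4q(v)\geq 4$ for $v\neq 0$. Therefore $\pm e$ are the only vectors of norm $1$ in $\Z e\perp M$. Any isometry then sends $e$ to $\pm e'$, and hence $M=e^{\perp}$ onto $M'=e'^{\perp}$. This gives $4q\cong 4q'$, so $q\cong q'$.
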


\subsection{Superspecial abelian varieties}
An elliptic curve is an abelian variety of dimension $1$. An isogeny between elliptic curves is a surjective homomorphism with finite kernel. An isogeny from an elliptic curve to itself is called an endomorphism, and the set of endomorphisms of $E$ forms the ring $\End(E)$. If two elliptic curves $E_1$,$E_2$ are isogenous $E_1 \sim E_2$, then $\End^{0}(E_1)\simeq \End^{0}(E_2)$.
An elliptic curve $E$ defined over $\overline{\F}_{p}$ is said to be \emph{supersingular} if the endomorphism algebra, $\End^0_{\overline{\F}_p}(E)$, is isomorphic to a definite quaternion algebra  $B_p$ over $\Q$ ramified at $p$ and $\infty$. Here, $B_p = \Q + \Q\mi + \Q\mj + \Q\mi\mj$ for $\mi^2=a$, $\mj^2=b$, and $\mi\mj = -\mj\mi$. In particular, for $p\equiv 11\pmod {12}$ we can consider $(a,b)=(-1,-p)$~\cite[Proposition 1]{EHLMP}.

An abelian variety over $\overline{\F}_{p}$  is said to be \emph{supersingular} if it is isogenous to a product of supersingular elliptic curves over $\overline{\F}_p$ \cite{Oort1974}, and it is said to be \emph{superspecial} if it is isomorphic to a product of supersingular elliptic curves over $\overline{\F}_{p}$  (as an unpolarized abelian variety). A curve $\mathcal{C}$ 
is called \emph{supersingular} (respectively, \emph{superspecial}) if its Jacobian $\A = \mathcal{J(C)}$ is supersingular (respectively, \emph{superspecial}). 

\begin{theorem}\label{unpolsuperspecial}(Deligne, Ogus, Shioda) If $\A/\overline{\F}_p$ is a superspecial abelian
variety with $\dim \A = g > 1$, then $\A\simeq  E^g$ for any supersingular elliptic curve $E$.
\end{theorem}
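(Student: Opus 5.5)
The plan is to reduce the statement to the case $g=2$ and then induct on $g$. The inductive step uses the fact that a superspecial abelian variety over $\overline{\F}_p$ is, by definition, isomorphic to a product $E_1\times\cdots\times E_g$ of supersingular elliptic curves. It therefore suffices to prove the following \emph{cancellation property}: for supersingular $E_1,E_2,E_1',E_2'$ we have $E_1\times E_2\cong E_1'\times E_2'$ whenever all four curves are supersingular. Granting this, take any fixed supersingular $E$ and replace the factors of $\A$ by $E$ two at a time. Write $E_1\times E_2\cong E\times E_2''$ for some supersingular $E_2''$, then apply the same to $E_2''\times E_3$, and so on. After $g-1$ steps, all factors except possibly the last equal $E$. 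A final swap against one of the copies of $E$, using $E\times E_g'\cong E\times E$, converts the last factor too; this is where $g>1$ is essential.

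For the key case $g=2$, I would translate the problem into quaternion arithmetic. Fix a supersingular $E$ and set $\curlyO=\End(E)$, a maximal order in $B_p$. The functor $X\mapsto \Hom(E,X)$ gives an equivalence, due to Deligne and the Ogus/Shioda line of argument. This equivalence runs between abelian varieties isogenous to a power of $E$ (supersingular ones, with suitable care in the superspecial case) and finitely generated torsion-free right $\curlyO$-modules. Under it, $E'$ corresponds to the locally free rank-one module, i.e.\ a right ideal, $I=\Hom(E,E')$. The product $E_1\times E_2$ corresponds to $I_1\oplus I_2$. The task is then to show that $I_1\oplus I_2\cong \curlyO\oplus\curlyO$ as right $\curlyO$-modules.

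For this I would invoke the classical Steinitz-type classification of locally free modules over orders in central simple algebras. A locally free module of rank $n\ge 2$ is determined by its rank and its reduced-norm class in the narrow class group of $\Q$, by Eichler's theorem, or equivalently by strong approximation for $\mathrm{SL}_n$. Strong approximation holds for $\mathrm{SL}_2(B_p)$ even though $B_p$ is definite, since the Eichler condition is satisfied for $M_2(B_p)$: it is not a totally definite quaternion algebra. Because the narrow class group of $\Q$ is trivial, every locally free rank-$2$ module is free, so $I_1\oplus I_2\cong\curlyO^2$. This yields $E_1\times E_2\cong E\times E$.

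The main obstacle is making the equivalence of categories rigorous. The step needing most care is checking that $\Hom(E,-)$ is fully faithful and essentially surjective onto the relevant modules in characteristic $p$, including inseparable isogenies. If that proves delicate, my fallback is a direct lattice argument. Realize $I_1\oplus I_2$ inside $B_p^2$, use local freeness at each prime $\ell$ (including $\ell=p$, where $\curlyO_p$ is the unique maximal order of a division algebra), and then apply Eichler's norm theorem to globalize the local bases.
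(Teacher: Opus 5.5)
Your proposal is correct and is essentially the classical argument of Deligne, which the paper does not reprove but only cites from Ogus, Shioda and Li--Oort. That argument passes to right $\curlyO$-modules via $\Hom(E,-)$ and uses that $M_2(B_p)$ satisfies the Eichler condition and $\Q$ has trivial narrow class group, so every locally free right $\curlyO$-module of rank $2$ is free and $\Hom(E,E_1)\oplus\Hom(E,E_2)\cong\curlyO^2$.

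Two points to tighten. First, apply $\Hom(E,-)$ only to products of supersingular elliptic curves, where full faithfulness reduces to the Deuring correspondence $\Hom(E_1,E_2)\cong\Hom_{\curlyO}(\Hom(E,E_1),\Hom(E,E_2))$. On all varieties isogenous to $E^g$ the functor is not full, since non-superspecial ones give the same free modules. Second, your lattice fallback only re-proves the module step, so it is no substitute for that geometric input.
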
 
For proof, see \cite[Theorem 6.2]{Ogus},\cite[Theorem 3.5]{Shioda}, and \cite[Section 1.6]{OortLiModularSpacesofSuperSingular}. 

In the case of dimension $g=1$, there are many superspecial abelian varieties (i.e., supersingular elliptic curves), each with one principal polarization, but in the case of  $g>1$, there is one superspecial abelian variety with many principal polarizations.

The map in Theorem~\ref{mainisomorphism} (or using Corollary 2.9 of \cite{ibukiyama1986supersingular}) induces a bijection between principal polarizations  $\theta\in \mathcal{P}(E_1\times E_2)$ and positive definite quaternion hermitian matrices with determinant $1$
\begin{align}\label{eq:sspolar}
    \mathcal{P}(E_1\times E_2) =  \left\{\begin{pmatrix}
        u & \alpha \\ \overline{\alpha} & v
    \end{pmatrix} :u,v\in \Z_{>0}, \ \alpha\in \curlyO,\  uv-\alpha\overline{\alpha}=1\right\}
\end{align}
where $\curlyO$ is a maximal order of $B_p$.

The natural question is how to choose a representative in the conjugacy (isomorphism) class of a principal polarization on the superspecial abelian surface, as there are too many of them to use in the computations. This issue has been considered by Hashimoto and Ibukiyama~\cite{HashIbu}. 
\noindent 
Let $\curlyO$ be a maximal order of $B_p$, $\mL(\curlyO)$ be the set of all maximal $\curlyO$-lattices, and $\mL(\curlyO; 0)$  be the principal genus. Then any $\curlyO$-lattice in $B_p^2$ can be written as $\Lambda=(\curlyO, \curlyO)G$ for $G\in \GL_2(B_p)$. 
\begin{proposition}\emph{\cite[Proposition 22]{HashIbu}}
    $\Lambda=(\curlyO, \curlyO)G$ belongs to $\mL(\curlyO;0)$ if and only if $G$ satisfies the condition
    \begin{equation}\label{eq:polarizationcondition}
    GG^{*}=r\begin{pmatrix}
        u & \alpha \\ \bar{\alpha} & v
    \end{pmatrix}; \quad u,v \in \Z_{>0}, \quad \alpha\in \curlyO, \quad uv-\nrd(\alpha)=1, \quad r\in \Q^{\times}_{+}.
\end{equation} 
\end{proposition}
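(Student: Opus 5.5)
We prove the two implications separately, working with the Hermitian form $h_G(x,y)=xGG^{*}y^{*}$ on $B_p^2$ attached to $\Lambda=(\curlyO,\curlyO)G$. Two standard facts about maximal $\curlyO$-lattices will be used. First, the genera in $\mL(\curlyO)$ are detected locally. Over every prime $\ell\neq p$ we have $B_p\otimes\Q_\ell\cong M_2(\Q_\ell)$, and there all maximal lattices are locally isometric up to scaling. At $p$ there are exactly two local classes of maximal lattices in $B_{p,p}^2$: the one containing the standard lattice $(\curlyO_p,\curlyO_p)$ with the identity form, and a non-principal one. Second, $\mL(\curlyO;0)$ is by definition the genus of $(\curlyO,\curlyO)$ with form $I_2$, taken up to $\Q^\times_+$-scaling. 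With these in hand the statement becomes the following: $\Lambda$ lies in the principal genus if and only if $GG^{*}$ is a positive rational multiple of a unimodular positive definite Hermitian matrix over $\curlyO$.

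\textbf{The direction ($\Leftarrow$).}
\begin{enumerate}
\item Suppose $GG^{*}=rH$ with $H=\begin{pmatrix}u&\alpha\\ \bar\alpha&v\end{pmatrix}$ and $uv-\nrd(\alpha)=1$.
\item After rescaling by $r$, the lattice $\Lambda$ with form $h_G$ is isometric to $(\curlyO,\curlyO)$ with the form $H$.
\item The matrix $H$ is integral over $\curlyO$ and has reduced norm of determinant $1$, so it is unimodular.
\item For $\ell\neq p$, a unimodular Hermitian lattice over $M_2(\Z_\ell)$ is unique up to isometry, so $H$ is locally equivalent to $I_2$.
\item At $p$, I would diagonalise $H$ over $\curlyO_p$. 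Since $u\in\Z_p$, either $u$ is a unit, or else $\alpha$ is a unit of $\curlyO_p$, because $\nrd(\alpha)\equiv -1$ modulo $p$ when $p\mid uv$.
\item In either case $H$ reduces to $\mathrm{diag}(\varepsilon_1,\varepsilon_2)$ with $\varepsilon_i\in\Z_p^\times$.
\item The norm map $\curlyO_p^\times\to\Z_p^\times$ is surjective, so each $\varepsilon_i$ is a norm and the form is equivalent to $I_2$.
\item All local classes therefore agree with those of $(\curlyO,\curlyO)$, which gives $\Lambda\in\mL(\curlyO;0)$.
\end{enumerate}

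\textbf{The direction ($\Rightarrow$).}
\begin{enumerate}
\item Suppose $\Lambda\in\mL(\curlyO;0)$. Then for each $\ell$ there is $g_\ell\in\GL_2(\curlyO_\ell)$ with $G=c\,\gamma_\ell g_\ell$, where $\gamma_\ell$ is a local similitude of norm $c^2$ and $c\in\Q^\times_+$ is a global similitude factor.
\item Take $r$ to be the similitude norm of the genus, normalised by the reduced-norm content of $\det$.
\item Then $H\coloneqq r^{-1}GG^{*}$ is locally $g_\ell g_\ell^{*}$ at every $\ell$, so it is integral and unimodular at every $\ell$.
\item The entries of $H$ thus lie in $\curlyO$ off the diagonal, by the local–global principle for lattices $\curlyO=\bigcap_\ell(\curlyO_\ell\cap B_p)$, and the diagonal entries lie in $\Z$.
\item Positive definiteness of $H$ follows from that of $GG^{*}$, giving $u,v>0$.
\item Unimodularity at all primes forces $uv-\nrd(\alpha)$ to be a positive rational that is a unit everywhere, hence equal to $1$.
\end{enumerate}

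\textbf{Main obstacle.} The hard step is the analysis at $p$ in the direction ($\Leftarrow$): showing that determinant $1$ rules out the non-principal genus. The non-principal lattice has a Gram matrix like $\begin{pmatrix}0&\pi\\ \bar\pi&0\end{pmatrix}$ with $\pi$ a uniformiser of $\curlyO_p$, whose determinant has $p$-adic valuation $1$. The argument therefore reduces to a valuation count of the determinant, and I would make this count explicit.
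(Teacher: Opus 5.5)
The paper gives no proof of this statement; it is quoted from Hashimoto--Ibukiyama, so there is no in-paper argument to compare with. Your strategy is the natural self-contained proof and it works: show that $GG^{*}$ is, at every prime $\ell$, $\GL_2(\curlyO_\ell)$-congruent to $r I_2$ for one global $r$, and conversely. Several steps need repair, though.

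\textbf{($\Rightarrow$).} Step~1 has the factors in the wrong order. $\curlyO_\ell^2G=\curlyO_\ell^2\gamma_\ell$ means $G\gamma_\ell^{-1}\in\GL_2(\curlyO_\ell)$, i.e.\ $G=g_\ell\gamma_\ell$ with the unit matrix on the left. Only then is $GG^{*}=n_\ell\,g_\ell g_\ell^{*}$; with your $c\gamma_\ell g_\ell$ the product $GG^{*}$ does not simplify.

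There is also no single ``similitude norm of the genus''. The local factors $n_\ell\in\Q_\ell^\times$ vary with $\ell$ and are determined only up to $\Z_\ell^\times$. Set $r=\prod_\ell \ell^{v_\ell(n_\ell)}\in\Q^\times_{+}$, equivalently $r^2=\mathrm{Nrd}(G)$, the reduced norm in $M_2(B_p)$. Then absorb each unit $n_\ell/r$ into $g_\ell$, using surjectivity of $\nrd\colon\curlyO_\ell^\times\to\Z_\ell^\times$.

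In step~6, say why $uv-\nrd(\alpha)$ is a local invariant up to units: $\mathrm{Nrd}(H)=(uv-\nrd(\alpha))^2$ and $\mathrm{Nrd}(gHg^{*})=\mathrm{Nrd}(g)^2\,\mathrm{Nrd}(H)$.

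\textbf{($\Leftarrow$).} Your case split at $p$ does not handle $p\mid u$ and $p\mid v$ together. Then neither diagonal entry is a unit pivot, and knowing $\alpha\in\curlyO_p^\times$ is not yet a diagonalisation. Use the vector $(1,\alpha)$: its $H$-value is $u+2\nrd(\alpha)+v\nrd(\alpha)\equiv -2 \pmod p$, a unit since $p$ is odd. Conjugating by $\begin{psmallmatrix}1&\alpha\\0&1\end{psmallmatrix}$ makes this the $(1,1)$ entry and keeps the determinant equal to $1$. The pivot step then gives $\mathrm{diag}(u',1/u')$.

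The same argument works at every $\ell\neq p$, with $\det$ on $M_2(\Z_\ell)$ in place of $\nrd$. For $\ell=2$, replace $\alpha$ by any $\beta$ with $\trd(\alpha\bar\beta)$ odd. So you do not need the separate uniqueness theorem for unimodular lattices over $M_2(\Z_\ell)$.

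Also make explicit how local congruence gives genus membership. If $GG^{*}=r\,gg^{*}$ with $g\in\GL_2(\curlyO_\ell)$, then $\gamma_\ell:=g^{-1}G$ satisfies $\gamma_\ell\gamma_\ell^{*}=rI_2$ and $\curlyO_\ell^2G=\curlyO_\ell^2\gamma_\ell$.

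Finally, your ``main obstacle'' is not an obstacle. Once $H\sim I_2$ over $\curlyO_p$ is shown directly, there is no non-principal genus left to rule out. The parity argument you sketch is a valid alternative: the non-principal local Gram matrix has determinant of odd $p$-valuation, while rescaling by $r$ multiplies the determinant by $r^2$. But it applies only after you know $\Lambda$ is maximal, which you would have to prove separately.
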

Any maximal $\curlyO$-lattice in $\mL(\curlyO,0)$ is equivalent to a maximal $\curlyO$-lattice. Therefore, one can reduce the problem of finding all representatives of the classes in the $\mL(\curlyO;0)$, to the problem of finding all $(u,v,\alpha)$ satisfying \eqref{eq:polarizationcondition} up to the equivalence by $\GL_2(\curlyO)$.

\begin{lemma}\emph{\cite[Lemma 13]{HashIbu}}\label{conjugacyofpolarizations} With the same notation as above, we have the following information regarding the conjugacy classes of any lattice $\Lambda\in\mL(\curlyO,0)$.
    \begin{itemize}
    \item[(i)] The equivalence class of $\begin{pmatrix}
        u & \alpha \\ \overline{\alpha} & v
    \end{pmatrix}$ only depends on $\alpha \pmod v$ for fixed $v$.
    \item[(ii)]\label{secondpoint}   If $\beta,\beta'\in \curlyO^{\times}$, then $\begin{pmatrix}
        u & \alpha \\ \overline{\alpha} & v
    \end{pmatrix}$ and $\begin{pmatrix}
        u & \beta \alpha\beta' \\ \overline{\beta \alpha \beta'} & v
    \end{pmatrix}$ are equivalent.
    \item[(iii)] $\begin{pmatrix}
        u & \alpha \\ \bar{\alpha} & v
    \end{pmatrix}$ and $\begin{pmatrix}
        u & \bar{\alpha} \\ \alpha & v
    \end{pmatrix}$ are equivalent.
  \end{itemize}
\end{lemma}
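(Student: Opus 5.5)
We prove the three items by exhibiting, for each, an explicit matrix $u\in\GL_2(\curlyO)$ with $g_2=u^{*}g_1u$, where $g_1=\begin{pmatrix} u_0 & \alpha \\ \overline{\alpha} & v\end{pmatrix}$ and $uv-\alpha\overline{\alpha}=1$. (We write $u_0$ for the upper-left entry so as not to clash with the transforming matrix.) Equivalence of Hermitian forms is $g\sim u^{*}gu$ for $u\in\GL_2(\curlyO)$. Under the correspondence~\eqref{eq:sspolar}, this is exactly isomorphism of the principal polarizations (equivalently, of lattices in $\mL(\curlyO;0)$). So it suffices to produce elementary matrices in $\GL_2(\curlyO)$ realizing each move, and to check that the transformed matrix stays in the normalized shape with positive integer diagonal and reduced norm condition $1$.

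For (i), fix $v$ and let $\alpha'=\alpha+\gamma v$ with $\gamma\in\curlyO$. Take the unipotent matrix $u=\begin{pmatrix} 1 & 0\\ \overline{\gamma} & 1\end{pmatrix}$, which lies in $\GL_2(\curlyO)$ since its inverse replaces $\gamma$ by $-\gamma$. Compute $u^{*}g_1u$ entrywise, keeping track of noncommutativity: right multiplication acts on columns and left multiplication by $u^{*}$ acts on rows. The $(2,2)$ entry stays $v$, the off-diagonal becomes $\alpha+ v\gamma$ (or $\alpha+\gamma v$; since $v\in\Z$ is central this makes no difference), and the $(1,1)$ entry becomes $u_0+\alpha\overline{\gamma}+\gamma\overline{\alpha}+v\,\gamma\overline{\gamma}=u_0+\trd(\alpha\overline{\gamma})+v\nrd(\gamma)\in\Z$. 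Positivity and the determinant condition are automatic from multiplicativity of the reduced norm of the determinant, i.e.\ $\nrd$ of a unipotent is $1$. Equivalently, one checks directly that $u'v-\nrd(\alpha')=1$ forces $u'=(1+\nrd(\alpha'))/v$. So the class depends only on $\alpha\bmod v$.

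For (ii), use the diagonal matrix $u=\operatorname{diag}(\beta_1,\beta_2)$ with $\beta_i\in\curlyO^{\times}$. Then $u^{*}g_1u=\begin{pmatrix}\nrd(\beta_1)u_0 & \overline{\beta_1}\alpha\beta_2\\ * & \nrd(\beta_2)v\end{pmatrix}$. Since $\curlyO$ is a definite order, its units have reduced norm $1$, so the diagonal is unchanged and the off-diagonal entry becomes $\overline{\beta_1}\alpha\beta_2$. Choosing $\beta_1=\overline{\beta}$ and $\beta_2=\beta'$ gives $\beta\alpha\beta'$.

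For (iii), I expect the main obstacle. The naive swap matrix $\begin{pmatrix}0&1\\1&0\end{pmatrix}$ sends $\alpha$ to $\overline{\alpha}$ but also interchanges $u_0$ and $v$. So we must get back to diagonal $(u_0,v)$ while keeping the off-diagonal equal to $\overline{\alpha}$. My first attempt is to look for $\gamma\in\curlyO$ with $\gamma\alpha\gamma^{-1}=\overline{\alpha}$ and $\gamma\in\curlyO^{\times}$, which would reduce (iii) to (ii). This is not generally available, since conjugating $\alpha$ to $\overline{\alpha}$ in $B_p$ requires an element of norm possibly not $1$. The fallback is to invoke the abstract fact behind the statement: the main involution $x\mapsto\overline{x}$ combined with transposition, $g\mapsto\overline{g}^{\,t}$, preserves the genus and class set, so the map on classes is well defined, and one then identifies it with the identity via the lattice description. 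Concretely, I would try the composite of the swap with the moves (i) and (ii) to return the diagonal to $(u_0,v)$. If that fails, I would argue that $\overline{g}^{\,t}=g^{*t}$ and that the Hermitian conjugate $g^{*}=g$ already equals the matrix with $\overline{\alpha}$ in the $(1,2)$ slot under reading the form with the opposite order $\curlyO^{\mathrm{op}}\cong\curlyO$ via conjugation. This step I consider the delicate point and would cite \cite[Lemma 13]{HashIbu} for it.
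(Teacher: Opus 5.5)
\textbf{Parts (i) and (ii) are fine; part (iii) has a genuine gap.}

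For (i), the lower unipotent matrix $\begin{pmatrix}1&0\\ \bar\gamma&1\end{pmatrix}$ gives off-diagonal entry $\alpha+v\gamma$. It gives upper-left entry $u+\trd(\alpha\bar\gamma)+v\nrd(\gamma)$, which agrees with $(1+\nrd(\alpha+v\gamma))/v$. For (ii), $\operatorname{diag}(\bar\beta,\beta')$ works because units of the definite order $\curlyO$ have reduced norm $1$. The paper gives no proof of its own, only the citation to Hashimoto--Ibukiyama.

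\textbf{Why your routes to (iii) do not work.} You never actually produce the required element of $\GL_2(\curlyO)$, and none of your fallbacks does so.
\begin{itemize}
\item Showing that $g\mapsto\bar g$ is well defined on classes would not show that it is the identity on classes. Well-definedness is also not formal here, because entrywise conjugation is not multiplicative on $M_2(B_p)$: in general $\overline{AB}\neq\bar A\,\bar B$.
\item Moves of type (i) and (ii) never change the $(2,2)$ entry. After the swap has put $u$ in that position, they cannot restore $v$ unless $u=v$.
\item Reading the form over $\curlyO^{\mathrm{op}}$ changes the module structure. It does not give a $\GL_2(\curlyO)$-congruence between $g$ and $\bar g$.
\item Citing Lemma 13 of Hashimoto--Ibukiyama to prove part (iii) of that same lemma is circular.
\end{itemize}

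\textbf{The missing idea is unimodularity.} Since $u,v\in\Z$ are central and $\alpha\bar\alpha=\bar\alpha\alpha=\nrd(\alpha)$, the condition $uv-\nrd(\alpha)=1$ shows that $g=\begin{pmatrix}u&\alpha\\ \bar\alpha&v\end{pmatrix}$ lies in $\GL_2(\curlyO)$. Its inverse is $g^{-1}=\begin{pmatrix}v&-\alpha\\ -\bar\alpha&u\end{pmatrix}$, which is again Hermitian with entries in $\curlyO$ (recall $\bar\alpha=\trd(\alpha)-\alpha$). The argument then has two steps.
\begin{itemize}
\item First, $g^{-1}=(g^{-1})^{*}\,g\,g^{-1}$, so $g^{-1}\sim g$.
\item Second, with $J=\begin{pmatrix}0&-1\\ 1&0\end{pmatrix}$ one has $J^{*}g^{-1}J=\begin{pmatrix}u&\bar\alpha\\ \alpha&v\end{pmatrix}$. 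This is the quaternionic adjugate identity.
\end{itemize}
Combining the two steps gives one explicit matrix:
\[
x=g^{-1}J=\begin{pmatrix}-\alpha&-v\\ u&\bar\alpha\end{pmatrix}\in\GL_2(\curlyO),\qquad gx=J,\qquad x^{*}gx=x^{*}J=\begin{pmatrix}u&\bar\alpha\\ \alpha&v\end{pmatrix}.
\]
The identity $gx=J$ is a direct entrywise check using only $uv-\alpha\bar\alpha=1$ and the centrality of $u$ and $v$. This is the step your proposal lacks, and with it (iii) becomes as elementary as (i) and (ii).
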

Later, in Algorithm~\ref{alg:polz}, we will apply Lemma~\ref{conjugacyofpolarizations} to find representatives in the conjugacy classes of principal polarizations.
By using the basis of $\curlyO$ over $\Z$, we can give an algorithm to find all triples $(u, v, \alpha)$ satisfying the condition in \eqref{eq:polarizationcondition} as follows:
\begin{enumerate}
    \item Let $v=1,2,3, \dots$
    \item For each $v$, find all $\alpha\in \curlyO/(v\curlyO)$ such that $\nrd(\alpha)+1=0 \pmod v$
    \item Compute $u=(\nrd(\alpha)+1)/v$.
\end{enumerate}

\section{Computation}\label{designing}
A refined Humbert invariant $q_{(\A, \theta)}$, Definition~\ref{RHIdefn}, corresponding to a principally polarized superspecial abelian surface $(\A, \theta) $ is an integral quadratic form in $5$ variables, called the \emph{quintic refined Humbert invariant}\footnote{Kani refers to them as quintic quadratic forms, although many people in the literature use quinary integral form. We prefer to use quintic integral forms following Kani's terminology.}. 

Unless stated otherwise, let $p$ be an odd prime, let $E$ be a fixed supersingular elliptic curve defined over $\overline{\F}_{p}$, and let $\A=E\times E$  be a supersingular product surface with principal polarization $\theta$. Assume that $p \equiv 11 \pmod{12}$ and $\End^0_{\overline{\F}_p}(E) = B_p := (-1, -p | \mathbb{Q})$ where  $B_p = \Q + \Q\mi + \Q\mj+ \Q\mi\mj$ where $\mi^2=-1$ and $\mj^2=-p$.

\begin{remark}
The isogeny defect  $m_{\psi} = 1$ if and only if $\psi$ is induced by an isogeny, see \cite[Remark 21]{KaniESCI}. Since we are interested in the product surface $\A=E_1\times E_2$ where $E_1$ and $E_2$ are supersingular elliptic curves, and the fact that supersingular elliptic curves are all isogenous to each other, then the isogeny defect is $m_{\psi}=1$ (also see Theorem~\ref{defectis1}).
\end{remark}

\subsection{Quintic integral quadratic forms as refined Humbert invariants} \label{PPequivalence}
First, we will walk through the two methods available for computing the quintic refined Humbert invariant $q_{(\A,\theta)}$ for a principally polarized superspecial abelian surface $(\A,\theta)$ where $\A=E_1\times E_2$ for supersingular elliptic curves $E_1$ and $E_2$.

Recall from \ref{unpolsuperspecial} that all superspecial abelian surfaces over a field of characteristic $p$ are isomorphic if we ignore polarizations.
Therefore, any principally polarized superspecial abelian surface $(\A,\lambda_\A)$ is isomorphic to some fixed superspecial surface $\A_0= E\times E$ equipped with a suitable principal polarization $\lambda_{\A_0}$.
Explicitly, if $\vartheta  : \A_0 \to \A$ is an unpolarized isomorphism, then we
can take $\lambda_0 = \hat{\vartheta }\,\lambda_\A\,\vartheta $. A construction of Ibukiyama–Katsura–Oort \cite{ibukiyama1986supersingular} encodes a principal polarization $\lambda_0$ on $\A_0$ by a matrix with coefficients in  $\mathcal{O}_0$.  Consider the map
\[
\begin{aligned}
\Upsilon : \mathcal{P}(E\times E) &\longrightarrow \End(\A_0) \\
\lambda_\A &\longmapsto \lambda_0^{-1}\lambda_\A,
\end{aligned}
\]
note that $\lambda_0^{-1}\lambda_\A$ 
may be viewed as an element of $M_2(\mathcal O_0)$. By specializing \cite[Corollary 2.9]{ibukiyama1986supersingular} to principal polarizations,
the map $\Upsilon$ is injective. After translating it with the Deuring correspondence to the quaternion setting, we can define the set of polarizations, and its image is exactly the set
\[
\operatorname{Pol}(\A_0) :=
\left\{
\begin{pmatrix}
        u & \alpha \\ \overline{\alpha} & v
    \end{pmatrix} 
\ \middle|\ 
u,v \in \mathbb Z_{>0},\ r \in \mathcal{O}_0,\ uv - \alpha\overline{\alpha} = 1
\right\}
\ \subset\ \operatorname{GL}_2(\mathcal{O}_0).
\]
Equivalently, $\Upsilon$ gives a bijection between $\mathcal{P}(E\times E)$  and
$\operatorname{Pol}(\A_0)$.

Now, we fix our convention throughout the rest of the computations\footnote{In this computation, any maximal order of $B_p$ can be chosen, and the computation can be modified with respect to the maximal order.}. For $p\equiv 11\pmod {12}$ and $B_p = (-1,-p |\Q)$, we can fix $\curlyO = \Z\langle 1, \beta_1, \beta_2, \beta_3\rangle = 
    \Z\left\langle 1, \mi,\dfrac{\mi + \mj}{2},\dfrac{1+\mi\mj}{2} \right\rangle \cong \End(E)$, where $j(E)=1728$.
Then choosing $\alpha_0 = w_0+x_0\beta_1 +y_0\beta_2 +z_0\beta_3\in \curlyO$, we get the integral quadratic form 
\begin{align}\label{eq:normform}
   g(w_0,x_0,y_0,z_0) &= \nrd(\alpha_0) = w_0^2 + w_0z_0 + x_0^2 + x_0y_0 + \frac{p+1}{4}y_0^2 + \frac{p+1}{4}z_0^2,
\end{align} such that $\det(g) = \det\begin{psmallmatrix} 2 & 0 & 0 & 1 \\ 0 & 2 & 1 & 0 \\ 0 & 1 & \frac{p+1}{2} & 0 \\ 1 & 0 & 0 & \frac{p+1}{2}\end{psmallmatrix} =  p^2$.

We walk through the method for computing the quintic refined Humbert invariant, summarized in Algorithm~\ref{alg:RHI}.
\begin{enumerate}
    \item Fix the principal polarization $\theta$ on the abelian surface $E\times E$ represented by a positive definite hermitian matrix of determinant 1
    \[\theta \coloneqq \begin{pmatrix}
    u_0 & w_0+x_0\beta_1+y_0\beta_2+z_0\beta_3 \\
    w_0+x_0\bar{\beta}_1 + y_0 \bar{\beta}_2 + z_0 \bar{\beta}_3 & v_0 
     \end{pmatrix} = 
     \begin{pmatrix}
    u_0 & \alpha_0 \\
    \overline{\alpha}_0 & v_0 
     \end{pmatrix}\]
     for fixed values of $u_0, v_0$ and $\alpha_0 \coloneqq w_0+x_0\beta_1+y_0\beta_2+z_0\beta_3\in \curlyO$ such that $u_0>0$, $v_0 > 0$, and $u_0v_0 - \nrd(\alpha_0)= 1$, i.e. $u_0v_0 - w_0^2 - w_0z_0 - x_0^2 - x_0y_0 - \frac{p+1}{4}y_0^2 - \frac{p+1}{4}z_0^2 = 1$ (i.e. self-intersection number of $\theta$ is 2). 
\item Choose an arbitrary divisor $D$ on $E\times E$ 
    \[D \coloneqq \begin{pmatrix}
    u & w+x\beta_1 + y\beta_2 + z \beta_3 \\
    w+x\bar{\beta}_1 + y \bar{\beta}_2 + z \bar{\beta}_3 & v 
    \end{pmatrix} \in  \NS(E\times E)\subseteq M_2(\curlyO)\]  
    for $u, v, w, x, y, z \in \mathbb{Z}$. Note that
    \[w+x\beta_1 + y\beta_2 + z \beta_3  = \left(w +\frac{z}{2}\right)+ \left(x+\frac{y}{2}\right)\mi + \frac{y}{2}\mj + \frac{z}{2}\mi\mj.\]
\item Compute the intersection of the divisors $\theta$ and $D$
\begin{align*}
   (D\cdot \theta) &= v_0u+u_0v+2\left(p\frac{z_0z}{4}-\left(w_0+\frac{z_0}{2}\right)\left(w+\frac{z}{2}\right)- \left(x_0+\frac{y_0}{2}\right)\left(x+\frac{y}{2}\right)-p\frac{y_0y}{4}\right)\\
   &= v_0u + u_0v - (2w_0+z_0)w - (2x_0+y_0)x - \left(x_0 +\frac{y_0(p+1)}{2}\right)y - \left(w_0 - \frac{z_0(p-1)}{2}\right)z,\\
    (D\cdot \theta)^2 &= 
    \resizebox{\textwidth}{!}{%
    $\displaystyle
        \begin{aligned}[t]
    & v_0^2 u^2 + 2 u_0 v_0 u v - (4 v_0 w_0 + 2 v_0 z_0)uw - (4 v_0 x_0 + 2 v_0 y_0) u x - (2 v_0 x_0 + v_0 y_0 p + v_0 y_0)uy + (v_0 z_0 p -2 v_0 w_0 - v_0 z_0) u z \\
    & + u_0^2 v^2 - (4 u_0 w_0 + 2 u_0 z_0) v w - (4 u_0 x_0 + 2 u_0 y_0) v x - (2 u_0 x_0 + u_0 y_0 p + u_0 y_0) v y + (u_0 z_0 p - 2 u_0 w_0 - u_0 z_0) v z \\
    & + (4 w_0^2 + 4 w_0 z_0 + z_0^2) w^2 + (8 w_0 x_0 + 4 w_0 y_0 + 4 x_0 z_0 + 2 y_0 z_0) w x + (4 w_0 x_0 + 2 w_0 y_0 p + 2 w_0 y_0 + 2 x_0 z_0 + y_0 z_0 p + y_0 z_0) w y + (4 w_0^2 - 2 w_0 z_0 p + 4 w_0 z_0 - z_0^2 p + z_0^2) w z \\
    & + (4 x_0^2 + 4 x_0 y_0 + y_0^2) x^2 + (4 x_0^2 + 2 x_0 y_0 p + 4 x_0 y_0 + y_0^2 p + y_0^2) x y + (4 w_0 x_0 + 2 w_0 y_0 - 2 x_0 z_0 p + 2 x_0 z_0 - y_0 z_0 p + y_0 z_0) x z \\
    & + (x_0^2 + x_0 y_0 p + x_0 y_0 +  \frac{1}{4} y_0^2 p^2 +  \frac{1}{2} y_0^2 p +  \frac{1}{4} y_0^2) y^2 + (2 w_0 x_0 + w_0 y_0 p + w_0 y_0 - x_0 z_0 p + x_0 z_0 -  \frac{1}{2} y_0 z_0 p^2 +  \frac{1}{2} y_0 z_0) y z 
    \end{aligned}
    $} \\
    &\quad + (w_0^2 - w_0 z_0 p + w_0 z_0 +  \frac{1}{4} z_0^2 p^2 -  \frac{1}{2} z_0^2 p +  \frac{1}{4} z_0^2) z^2.
\end{align*}
\item Compute the self-intersection of the divisor $D$
        \[(D\cdot D) = 2\left(uv - wz - xy - w^2 -x^2\right) - \frac{(p+1)}{2}(z^2 + y^2).\]
\item Compute $\tilde{q}_{(E \times E, \theta)}(D)=(D\cdot \theta)^2 - 2(D\cdot D)$
    \begin{align*}
        \tilde{q}_{(E\times E, \theta)}(D) & = 
        \resizebox{\textwidth}{!}{%
    $\displaystyle
        \begin{aligned}[t]
        & v_0^2 u^2 + (2 u_0 v_0 - 4) u v + (-4 v_0 w_0 - 2 v_0 z_0) u w + (-4 v_0 x_0 - 2 v_0 y_0) u x + (-2 v_0 x_0 - v_0 y_0 p - v_0 y_0) u y + (-2 v_0 w_0 + v_0 z_0 p - v_0 z_0) u z \\
        &  + u_0^2 v^2 + (-4 u_0 w_0 - 2 u_0 z_0) v w + (-4 u_0 x_0 - 2 u_0 y_0) v x + (-2 u_0 x_0 - u_0 y_0 p - u_0 y_0) v y + (-2 u_0 w_0 + u_0 z_0 p - u_0 z_0) v z \\
        & + (4 w_0^2 + 4 w_0 z_0 + z_0^2 + 4) w^2 + (8 w_0 x_0 + 4 w_0 y_0 + 4 x_0 z_0 + 2 y_0 z_0) w x + (4 w_0 x_0 + 2 w_0 y_0 p + 2 w_0 y_0 + 2 x_0 z_0 + y_0 z_0 p + y_0 z_0) w y + (4 w_0^2 - 2 w_0 z_0 p + 4 w_0 z_0 - z_0^2 p + z_0^2 + 4) w z \\
        & + (4 x_0^2 + 4 x_0 y_0 + y_0^2 + 4) x^2 + (4 x_0^2 + 2 x_0 y_0 p + 4 x_0 y_0 + y_0^2 p + y_0^2 + 4) x y + (4 w_0 x_0 + 2 w_0 y_0 - 2 x_0 z_0 p + 2 x_0 z_0 - y_0 z_0 p + y_0 z_0) x z \\
        & + (x_0^2 + x_0 y_0 p + x_0 y_0 + \frac{1}{4} y_0^2 p^2 + \frac{1}{2} y_0^2 p + \frac{1}{4} y_0^2 + p + 1) y^2 + (2 w_0 x_0 + w_0 y_0 p + w_0 y_0 - x_0 z_0 p + x_0 z_0 - \frac{1}{2} y_0 z_0 p^2 + \frac{1}{2} y_0 z_0) y z     \end{aligned}
    $} \\
        &\quad+ (w_0^2 - w_0 z_0 p + w_0 z_0 + \frac{1}{4} z_0^2 p^2 - \frac{1}{2} z_0^2 p + \frac{1}{4} z_0^2 + p + 1) z^2\\
        & \coloneqq \frac{1}{2} X^t A X 
         \end{align*}
         where $X = \begin{psmallmatrix} u \\ v \\ w \\ x \\ y \\ z \end{psmallmatrix}$ and $A$ is the coefficient matrix given by
          \begin{align}\label{the-matrix}
          A =\resizebox{\textwidth}{!}{%
$
\setcounter{MaxMatrixCols}{20} 
\setlength{\arraycolsep}{1pt}   
\begin{pmatrix}
2v_0^2 & 2u_0v_0 - 4 & -4v_0w_0 - 2v_0z_0 & -4v_0x_0 - 2v_0y_0 & -2v_0x_0 - v_0y_0p - v_0y_0 & -2v_0w_0 + v_0z_0p - v_0z_0 \\
2u_0v_0 - 4 & 2u_0^2 & -4u_0w_0 - 2u_0z_0 & -4u_0x_0 - 2u_0y_0 & -2u_0x_0 - u_0y_0p - u_0y_0 & -2u_0w_0 + u_0z_0p - u_0z_0 \\
-4v_0w_0 - 2v_0z_0 & -4u_0w_0 - 2u_0z_0 & 8w_0^2 + 8w_0z_0 + 2z_0^2 + 8 & 8w_0x_0 + 4w_0y_0 + 4x_0z_0 + 2y_0z_0 & 4w_0x_0 + 2w_0y_0p + 2w_0y_0 + 2x_0z_0 + y_0z_0p + y_0z_0 & 4w_0^2 - 2w_0z_0p + 4w_0z_0 - z_0^2p + z_0^2 + 4 \\
-4v_0x_0 - 2v_0y_0 & -4u_0x_0 - 2u_0y_0 & 8w_0x_0 + 4w_0y_0 + 4x_0z_0 + 2y_0z_0 & 8x_0^2 + 8x_0y_0 + 2y_0^2 + 8 & 4x_0^2 + 2x_0y_0p + 4x_0y_0 + y_0^2p + y_0^2 + 4 & 4w_0x_0 + 2w_0y_0 - 2x_0z_0p + 2x_0z_0 - y_0z_0p + y_0z_0 \\
-2v_0x_0 - v_0y_0p - v_0y_0 & -2u_0x_0 - u_0y_0p - u_0y_0 & 4w_0x_0 + 2w_0y_0p + 2w_0y_0 + 2x_0z_0 + y_0z_0p + y_0z_0 & 4x_0^2 + 2x_0y_0p + 4x_0y_0 + y_0^2p + y_0^2 + 4 & 2x_0^2 + 2x_0y_0p + 2x_0y_0 + \frac{1}{2}y_0^2p^2 + y_0^2p + \frac{1}{2}y_0^2 + 2p + 2 & 2w_0x_0 + w_0y_0p + w_0y_0 - x_0z_0p + x_0z_0 - \frac{1}{2}y_0z_0p^2 + \frac{1}{2}y_0z_0 \\
-2v_0w_0 + v_0z_0p - v_0z_0 & -2u_0w_0 + u_0z_0p - u_0z_0 & 4w_0^2 - 2w_0z_0p + 4w_0z_0 - z_0^2p + z_0^2 + 4 & 4w_0x_0 + 2w_0y_0 - 2x_0z_0p + 2x_0z_0 - y_0z_0p + y_0z_0 & 2w_0x_0 + w_0y_0p + w_0y_0 - x_0z_0p + x_0z_0 - \frac{1}{2}y_0z_0p^2 + \frac{1}{2}y_0z_0 & 2w_0^2 - 2w_0z_0p + 2w_0z_0 + \frac{1}{2}z_0^2p^2 - z_0^2p + \frac{1}{2}z_0^2 + 2p + 2 \\
\end{pmatrix}
$
}\end{align}
        with $\det(A) = 2^{10} p^2\left(u_0v_0-\nrd(\alpha_0)-1\right) = 0$.
\item Find a $6\times 5$ matrix $T$ whose entries are integers such that the GCD of its $5\times 5$ minors is 1 and $T^t A T$ is the coefficient matrix of the positive-definite quintic form (refined Humbert invariant $q_{(E_1 \times E_2, \theta)})$
    \[q_{(E \times E, \theta)}(D)= \frac{1}{2} \begin{pmatrix} t_0 & t_1 & t_2 & t_3 & t_4  \end{pmatrix} (T^t A T) \begin{pmatrix} t_0 \\ t_1\\t_2 \\ t_3 \\ t_4 \end{pmatrix}.\]
    Moreover, since the Picard number $\rho(\A)=6$, using Propositon~\ref{prop:detNS} we get 
 \begin{align}\label{eq:29p2}
     \det(T^tAT)=\frac{1}{2}(-4)^{\rho-1}\det(q_{E \times E}) = \frac{1}{2}(-4)^{6-1} (-p^2)= 2^9p^2
 \end{align}
    where the intersection form $q_{\A}=q_{E\times E}=\frac{1}{2}(D\cdot D) = uv - wz - xy - w^2 -x^2 - \frac{(p+1)}{4}(z^2 + y^2)$.
\end{enumerate}

We observe that $\tilde{q}_{(E\times E,\theta)}$ above is a positive semi-definite form and applying\footnote{Ours is a very special case:
\url{https://github.com/Nemocas/Nemo.jl/pull/2011}.} Simon's \textsf{indefiniteLLL}\footnote{\url{https://github.com/thofma/Hecke.jl/blob/master/src/QuadForm/indefiniteLLL.jl}} algorithm~\cite{simon,watkins} quickly gives us positive definite quintic form $q_{(E\times E,\theta)}$. Moreover, we can ensure with the irreducibility criterion, Proposition~\ref{irreducibilitycriterion} above, that the quintic form we obtained corresponds to a superspecial abelian surface $\A=E\times E$ over $\overline{\F}_p$ with a reducible principal polarization $\theta\in \mathcal{P}^{\red}(E\times E)$ by using Kannan-Fincke-Pohst \textsf{minVector}\footnote{\url{https://github.com/thofma/Hecke.jl/blob/master/src/QuadForm/Enumeration.jl}} algorithm~\cite{kannan,FinckePohst,XavierDamien} to check that 1 is the minimum value it represents. Therefore, Algorithm~\ref{alg:RHI} below lets us compute the quintic refined Humbert invariants for principally polarized superspecial abelian surfaces.

\begin{algorithm}[!ht]\label{alg:RHI}
  \caption{$\mathsf{RHI}(p,\theta)$}
  \vspace{.2ex}
  \Input{odd prime $p\equiv 11\pmod {12}$ such that $B_p=(-1,-p|\Q)$ and polarization $\theta = \begin{psmallmatrix}u_0 & \alpha_0 \\\overline{\alpha}_0& v_0\end{psmallmatrix}$ where $\alpha_0\coloneqq w_0+x_0\beta_1 + y_0\beta_2+z_0\beta_3\in \Z+\Z\beta_1 + \Z\beta_2 + \Z\beta_3$.}
  \Output{coefficient matrix of quintic refined Humbert invariant $q_{(E\times E,\theta)}$.}
  \vspace{.4ex}
  $A \leftarrow \text{the $6\times 6$ matrix as in \eqref{the-matrix}}$\;
  \tcp{the coefficient matrix of $\tilde{q}_{(A,\theta)}$ with $\det(A)=2^{10}p^2\left(u_0v_0-\nrd(\alpha_0)-1\right)=0$}
  $A' \leftarrow \mathsf{indefiniteLLL}(A)$\;
  \tcp{$A$ is $6\times 6$ positive semi-definite; $A'$ is $5\times 5$ positive definite.}
  \If{$\det(A') = 2^{9} p^2$}{
    \tcp{Proposition~\ref{prop:detNS} for $\rho = 6$ and $\det(q_{A}) = -p^2$}
    $L \leftarrow$ integer lattice with Gram matrix $A'/2$\;
    \If{$\mathsf{minVector}(L) = 1$}{\label{alg:RHI:5}
      \tcp{Proposition~\ref{irreducibilitycriterion} and \cite[\S 2]{FinckePohst}}
      \Return{$A'$}\;\label{alg:RHI:6}
    }
  }
  \Return{$0$}\;
\end{algorithm}

\begin{remark}\label{rem:q4}
    For $\theta = \begin{psmallmatrix} 1 & 0 \\ 0 & 1 \end{psmallmatrix}$ we get $T= \begin{psmallmatrix}
            0 & 0 & 0 & 0 & 0 \\
            1 & 0 & 0 & 0 & 0 \\
            0 & 0 & 1 & 0 & 0 \\
            0 & 1 & 0 & 0 & 0 \\
            0 & 0 & 0 & 1 & 0 \\
            0 & 0 & 0 & 0 & 1
            \end{psmallmatrix}$, such that $\det(T^tAT)=2^{9}p^2$, leading to 
            \[q_5(t_0,t_1,t_2,t_3,t_4) := q_{(E \times E, \theta)}(D)=t_0^2+4\left(t_1^2 + t_1t_3 + t_2^2 + t_2t_4 + \frac{p+1}{4}t_3^2 + \frac{p+1}{4}t_4^2\right).\]
Note that $q_{(E \times E, \theta)}$ satisfies the reducibility condition stated in \eqref{irreducibilitycriterion}.  Therefore, as per Lemma~\ref{fromRHItodegreemap}, the degree map (degree form) is 
\begin{align}\label{eq:q4}
    q_4(t_1,t_2,t_3,t_4) := q_{E,E} = t_1^2 + t_1t_3 + t_2^2 + t_2t_4 + \frac{p+1}{4}t_3^2 + \frac{p+1}{4}t_4^2
\end{align}
that matches the norm form given in \eqref{eq:normform}. By Proposition~\ref{prop:generasum}, $\Gen(q_4)$ contains all unique degree forms and leads to all unique reducible refined Humbert invariants. It is computed via Kneser's neighbor method\footnote{\url{https://github.com/thofma/Hecke.jl/blob/master/src/QuadForm/Quad/GenusRep.jl}}~\cite{Kneser,Kirschmer,VoightKneser}.

\end{remark}

\subsection{Principal polarizations}
We aim to determine all possible quintic refined Humbert invariants that are not equivalent. However, two distinct principal polarizations can lead to the same quintic refined Humbert invariant. Indeed, we verified that for a given prime $p$, there are examples of principally polarized abelian superspecial surfaces $(\A,\theta)$ and $(\A',\theta')$ leading to the equivalent refined Humbert invariants $q_{(\A,\theta)}\sim q_{(\A',\theta')}$ in the experiments by using the count in Proposition~\ref{prop:generasum}.

We fix the definite quaternion algebra
$B_p=(-1,-p|\Q)$ and
$p \equiv 11 \pmod{12}$, and  a maximal order $\mathcal{O}\subset B_p$ with a chosen $\mathbb{Z}$-basis $\{1,\beta_1,\beta_2,\beta_3\}$.
A principal polarization on a superspecial surface corresponds to a positive-definite unimodular Hermitian matrix \eqref{eq:polarizationcondition}, and we enumerate these matrices up to congruence. 
One can get all product polarizations $\begin{psmallmatrix} u_0 & \alpha_0 \\ \overline{\alpha}_0 & v_0 \end{psmallmatrix}$ by varying $u_0,v_0\in \Z_{>0}$ and taking $\alpha_0=w_0+x_0\beta_1 + y_0\beta_2 + z_0\beta_3\in \Z\langle 1, \beta_1, \beta_2,\beta_3\rangle$ such that $0\leq w_0,x_0,y_0,z_0\leq v_0-1$ and $\nrd(w_0+x_0\beta_1 +y_0\beta_2 +z_0\beta_3) = u_0v_0 -1$. We start with eliminating easy isomorphic polarizations by Lemma~\ref{conjugacyofpolarizations}.

We first recall some basics of quaternion hermitian forms from \cite{Shimura}. Viewing $B_p^2$ as a left vector space over $B_p$, the \emph{binary definite quaternion Hermitian form} on $B_p^2$ is defined as the unique form up to isometry, and explicitly given  by
$h(x,y)=\sum_{i=1}^{2}x_i\overline{y_i}
 =x_1\overline{y_1}+x_2\overline{y_2}$,
for vectors $x=(x_1,x_2), y=(y_1,y_2)\in B_p^2$ where $\overline{\phantom{x}}$ is the canonical involution of $B_p$.

The enumeration of principal polarizations on a superspecial abelian surface in Algorithm~\ref{alg:polz} relies on the following criterion for determining whether two principal polarizations are isomorphic. The method is adapted from Chisholm’s PhD thesis \cite[§3.8.1]{SarahChisholm2014}, itself based on \cite[Lemma 6.2]{GreenbergVoight2014} of Greenberg and Voight, specialized to a maximal order $\curlyO\subset B_p$.
Let $h_\theta$ be the quaternion binary Hermitian form associated with a principal polarization $\theta$ on $\curlyO^2\cong \Z^8$, and fix a $\Q$-basis $\{a_1,\ldots,a_4\}$ of $B_p$. Now, we define the associated integral trace forms in 8 variables by \begin{equation}\label{traceforms}
F_i(x,y)=\trd(a_i h_\theta(x,y)) \text{ for } i=1,\ldots,4.
\end{equation} Two principal polarizations $\theta$ and $\theta'$ are isomorphic if and only if their associated $4$-tuples $(F_1,F_2,F_3,F_4)$ and  $(F'_1,F'_2,F'_3,F'_4)$ of auxiliary trace forms are simultaneously isometric, that is, if there exists a $P\in\GL_8(\mathbb Z)$ carrying each $F_i$ to the corresponding trace form of the other polarization. This is the simultaneous-isometry criterion in \cite[Lemma~6.2]{GreenbergVoight2014}, which controls the $\curlyO$-module structure.
Therefore, Algorithm~\ref{alg:polz} below gives us a finite list of principal polarizations that will lead to an input $\theta$ to $\tilde{q}_{(E\times E,\theta)}$ given in \eqref{eq:tildeform}.

\begin{algorithm}[!ht]
  \caption{$\mathsf{polz}(p,V_{\max},U_{\max},T_{\max})$}
  \label{alg:polz}

  \Input{prime $p\equiv 11 \pmod{12}$; bounds
  $V_{\max},U_{\max}\in\mathbb{Z}_{>0}$; theta coefficients cutoff
  $T_{\max}\in\mathbb{Z}_{>0}$.}

  \Output{A list $\mathrm{Reps}$ of representatives of principal
  polarizations on $\mathcal{A}=E\times E$ up to equivalence, where
  $\theta=\begin{psmallmatrix}u&\alpha\\
  \overline{\alpha}&v\end{psmallmatrix}$ with $u,v>0$,
  $\alpha\in\mathcal{O}$, and $uv-\nrd(\alpha)=1$.}

  $\mathrm{Reps}\leftarrow\texttt{[]}$\;
  $\mathrm{Packets}\leftarrow\texttt{[]}$\;
  $\mathrm{Keys}\leftarrow\texttt{[]}$\;

  \For{$v\leftarrow 1$ \KwTo $V_{\max}$}{
    \ForEach{$\alpha'\in\mathcal{O}/v\mathcal{O}$ with
    $\nrd(\alpha')\equiv -1\pmod v$}{
      
      $\alpha\leftarrow\mathsf{ChooseLift}(\alpha',v)$\;\label{alg:polz:6}
      $u\leftarrow\bigl(\nrd(\alpha)+1\bigr)/v$\;

      \If{$0<u\leq U_{\max}$}{
        $\theta\leftarrow
        \begin{psmallmatrix}
          u & \alpha\\
          \overline{\alpha} & v
        \end{psmallmatrix}$\;

        \For{$i\leftarrow 1$ \KwTo $4$}{
          $F_i\leftarrow\mathsf{AuxiliaryTraceGram}(\theta,a_i)$\;
        }

        $(F_1',P)\leftarrow
        \mathsf{LLLGramWithTransform}(F_1)$\;

        \For{$i\leftarrow 2$ \KwTo $4$}{
          $F_i'\leftarrow PF_iP^t$\;
        }

        $\mathcal{F}_\theta'\leftarrow
        (F_1',F_2',F_3',F_4')$\;

        $\mathrm{key}\leftarrow
        \bigl(
        \mathsf{LocalGenus}_{\{2,p\}}(F_1'),
        \mathsf{ThetaSeriesInitials}(F_1',T_{\max}),
        \min(F_1'),
        \kappa(F_1')
        \bigr)$\;

        $\mathrm{isNew}\leftarrow\mathsf{true}$\;

        \For{$j\leftarrow 1$ \KwTo $\lvert\mathrm{Packets}\rvert$}{
          \If{$\mathrm{key}=\mathrm{Keys}[j]$}{
            \If{$\mathsf{ExactIsometric}
            \bigl(F_1',(\mathrm{Packets}[j])_1\bigr)$}{
              \If{$\mathsf{ExactSimultaneousIsometric}\label{alg:polz:20}
              \bigl(\mathcal{F}_\theta',\mathrm{Packets}[j]\bigr)$}{
                $\mathrm{isNew}\leftarrow\mathsf{false}$\;
                \textbf{break}\;\label{alg:polz:23}
              }
            }
          }
        }

        \If{$\mathrm{isNew}$}{
          append $\theta$ to $\mathrm{Reps}$\;
          append $\mathcal{F}_\theta'$ to $\mathrm{Packets}$\;
          append $\mathrm{key}$ to $\mathrm{Keys}$\;
        }
      }
    }
  }

  \Return{$\mathrm{Reps}$}\;
\end{algorithm}

\subsection{Counting distinct forms}\label{sec:classnumbers}
Knowing the expected number of unique (non-isometric) refined Humbert invariants will significantly improve the runtime of Algorithm~\ref{alg:allRHI} since it will allow us to stop computing new refined Humbert invariants once the target number of non-isometric quintic integral quadratic forms is found. Therefore, here we discuss our attempt to get a bound on this number.

When $\A$ is a supersingular elliptic curve, Deuring showed that $\A$ has a model defined over $\overline{\mathbb{F}}_{p}$ and the class number $\mathbf{h}$ of $\End(\A)$ is calculated by Eichler \cite{eichlerclassnumberh}, Deuring \cite{deuringcorrespondence}, and Igusa \cite{igusaclassnumber} as
\begin{align} \label{h:classnumber}
    \mathbf{h} & = \frac{p-1}{12} + \frac{1}{4}\left(1-\legendre{-1}{p}\right) + \frac{1}{3}\left(1-\legendre{-3}{p}\right).
\end{align}

 The calculation of the number of isomorphism classes of principal polarizations on an abelian surface $\A$, especially the number of isomorphism classes of smooth genus $2$ curves lying on $\A$, was calculated by Ibukiyama, Katsura, and Oort \cite{ibukiyama1986supersingular} in 1986 when $\A = E\times E'$, where $E$ and $E'$ are supersingular elliptic curves. 
\begin{theorem}\emph{\cite[Theorem 2.10]{ibukiyama1986supersingular}}
    Let $B_p^2$ be a left $B_p$-vector space. The number of principal polarizations on $\A=E\times E$ up to automorphisms of $\A$ is equal to the class number $\mathbf{H}$ of the principal genus of the quaternion Hermitian space $B_p^2$.
\end{theorem}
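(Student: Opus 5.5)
The plan is to set up a bijection between isomorphism classes of principally polarized abelian surfaces $(E\times E,\theta)$ and classes of $\curlyO$-lattices in the principal genus $\mL(\curlyO;0)$ of the Hermitian space $(B_p^2,h)$. The first step is to fix $\A_0=E\times E$ with $\End(E)=\curlyO$, so that $\End(\A_0)=M_2(\curlyO)$ and $\operatorname{Aut}(\A_0)=\GL_2(\curlyO)$. The map $\Upsilon$ of Section~\ref{PPequivalence} (i.e. \cite[Corollary 2.9]{ibukiyama1986supersingular}) identifies $\mathcal{P}(E\times E)$ with $\operatorname{Pol}(\A_0)$, the set of positive definite Hermitian matrices $H=\begin{psmallmatrix}u&\alpha\\ \overline{\alpha}&v\end{psmallmatrix}$ with $uv-\nrd(\alpha)=1$. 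An automorphism $g\in\GL_2(\curlyO)$ acts on polarizations by pullback $\theta\mapsto \hat g\theta g$, which on matrices reads $H\mapsto g^{*}Hg$. Hence the isomorphism classes of principal polarizations on $\A$, up to automorphisms of $\A$, are exactly the $\GL_2(\curlyO)$-congruence classes of such $H$.

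The second step passes from matrices to lattices. To each $H\in\operatorname{Pol}(\A_0)$ we attach a lattice. Since $H$ is a positive definite Hermitian matrix over the definite algebra $B_p$, I would diagonalize it over $B_p$ (Gram–Schmidt for Hermitian forms) to get $H=G G^{*}$ with $G\in\GL_2(B_p)$. We then set $\Lambda_H=(\curlyO,\curlyO)G\subset B_p^2$, so that the restriction of $h$ to $\Lambda_H$ has Gram matrix $H$ in the basis given by the rows of $G$. By \cite[Proposition 22]{HashIbu}, the condition \eqref{eq:polarizationcondition} with $r=1$ says precisely that $\Lambda_H\in\mL(\curlyO;0)$. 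Conversely, every lattice in the principal genus is, after scaling by $r\in\Q_{+}^{\times}$ (which does not change the isometry class of the lattice up to similitude), of this form.

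The third step checks that the correspondence is well defined and bijective on classes. If $H'=g^{*}Hg$ with $g\in\GL_2(\curlyO)$, then $\Lambda_{H'}$ is isometric to $\Lambda_H$. Conversely, an isometry $\Lambda_H\cong\Lambda_{H'}$ is a $B_p$-linear unitary map $\sigma$ of $B_p^2$ carrying one $\curlyO$-basis to an $\curlyO$-basis, and its matrix in these bases lies in $\GL_2(\curlyO)$ and conjugates $H$ to $H'$. The choice of $G$ is irrelevant, since changing $G$ by a unitary matrix gives an isometric lattice. Therefore classes in $\mL(\curlyO;0)$ under the unitary group correspond bijectively to $\GL_2(\curlyO)$-congruence classes of $\operatorname{Pol}(\A_0)$, and their number is $\mathbf{H}$ by definition.

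The main obstacle is the second step, specifically the surjectivity: showing that every lattice in the principal genus has a basis whose Gram matrix is integral, unimodular and of the required shape, with $u,v\in\Z_{>0}$ and $\alpha\in\curlyO$. I would handle it by invoking \cite[Proposition 22]{HashIbu} directly. If a self-contained argument were needed, I would show that a maximal lattice in the principal genus is locally free of rank two, with a unimodular Gram matrix at every prime, using that $\curlyO$ is a PID locally and $\curlyO_p$ is the valuation ring. Then, since $\curlyO$-lattices in $B_p^2$ of rank two are free over $\curlyO$ (Eichler's theorem fails for definite algebras, so I would use the stable-freeness of rank-two modules over the maximal order instead), we get a global basis, and its Gram matrix lands in $\operatorname{Pol}(\A_0)$.
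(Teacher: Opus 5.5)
Your proposal is correct and follows the same route the paper relies on. The paper does not reprove \cite[Theorem 2.10]{ibukiyama1986supersingular}, but its setup matches your Steps 1--3: the bijection $\Upsilon$ from \cite[Corollary 2.9]{ibukiyama1986supersingular}, the $\GL_2(\curlyO)$-congruence action on $\operatorname{Pol}(\A_0)$, and \cite[Proposition 22]{HashIbu} together with freeness of rank-two $\curlyO$-lattices to pass to $\mL(\curlyO;0)$. Two small points deserve a line: Gram--Schmidt only gives $H\sim\operatorname{diag}(u,1/u)$, so writing $H=GG^{*}$ needs every positive rational to be a reduced norm from $B_p$ (equivalently, uniqueness of the positive definite binary Hermitian space); and a reduced-norm comparison forces any similitude between two unimodular lattices to have factor $r=1$, so similitude classes and unitary classes in the principal genus coincide.
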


Hashimoto and Ibukiyama \cite[p.~1]{HashIbu} gave a formula for $\mathbf{H}$. Later, Katsura and Oort gave \cite[Theorem 3.3]{katsura-oort} simpler version of the formula for $\mathbf{H}$ as follows:
\begin{align}\label{eq:Hshort}
    \begin{split}
    \mathbf{H} &= \frac{(p-1)(p+12)(p+23)}{2880} + \frac{2p+13}{96}\left(1-\legendre{-1}{p}\right)  + \frac{p+11}{36} \left(1-\legendre{-3}{p}\right)\\
    & + \frac{1}{8}\left(1-\legendre{-2}{p}\right) + \frac{1}{12}\left(1-\legendre{-3}{p}\right)\left(1-\legendre{-1}{p}\right) \\
    & +\begin{cases}
    &0 \quad p\equiv 1, 2, 3 \pmod 5\\
    &\frac{4}{5} \quad p\equiv 4 \pmod 5.
    \end{cases}
\end{split}
\end{align}

Neither $\mathbf{H}$ nor $\mathbf{\frac{h(h+1)}{2}}$ is an asymptotically tight bound for the number of unique refined Humbert invariants (up to isometry). The $\mathbf{H}$ serves as an upper bound on the number of all quintic refined Humbert invariants of a superspecial surface, while $\mathbf{\frac{h(h+1)}{2}}$ is an upper bound on the refined Humbert invariants constructed with reducible polarizations; Definition~\ref{defn:reducpolar}. If we take two supersingular product surfaces  $\A=E_1\times E_2$ and $\A'=E'_1 \times E'_2$, then notice that two different principally polarized superspecial product abelian surfaces $(\A,\theta)$ and $(\A',\theta')$ can have the same refined Humbert invariants $q_{(\A,\theta)}\sim q_{(\A',\theta')}$ (up to equivalence).  This means that the number of principally polarized superspecial abelian surfaces over $K$ is not equal to the number of refined Humbert invariants.

Algorithm~\ref{alg:allRHI} combines Algorithms~\ref{alg:polz}
and~\ref{alg:RHI} into a single pipeline.
The first step calls $\mathsf{dynamicPolz}(p)$, which executes
Algorithm~\ref{alg:polz} with a single bound $A_{\max}$ in place of
the separate bounds $V_{\max}$ and $U_{\max}$, with $T_{\max}$ fixed
at a small constant, growing $A_{\max}$ in stages until the stopping
condition of total $|\mathrm{Reps}| = \mathbf{H}$ is met.
We conjecture that $A_{\max} = O(p)$ at termination.
The resulting list $\Theta$ of all $\mathbf{H}$ principal
polarizations is then passed to Algorithm~\ref{alg:RHI} once per
polarization.

Of the $\mathbf{H}$ calls to $\mathsf{RHI}(p,\theta)$, only
$\mathbf{h}(\mathbf{h}+1)/2$ return a non-zero output, where
$\mathbf{h}$ is given by~\eqref{h:classnumber}.
These correspond to the refined Humbert invariants with reducible polarizations.
For each non-zero output $A$, Algorithm~\ref{alg:allRHI} applies a
two-step uniqueness check.
First, it tests exact matrix equality $A = A'$ against every
$A' \in Q$.
If that fails, it constructs the integer lattices $L_A$ and $L_{A'}$
with Gram matrices $A/2$ and $A'/2$ respectively, and calls
$\mathsf{isIsometric}(L_A, L_{A'})$ via the Plesken--Souvignier
algorithm\footnote{\url{https://github.com/thofma/Hecke.jl/blob/master/src/QuadForm/Morphism.jl}}~\cite{isometry}.
The list $Q$ at termination collects all refined Humbert
invariants with reducible polarizations up to isometry, and by Remark~\ref{rem:q4}, $|Q|$ equals
the number of genera of $q_4$ defined in~\eqref{eq:q4}.

\begin{algorithm}[!ht]
  \caption{$\mathsf{allRHI}(p)$}\label{alg:allRHI}
  \Input{odd prime $p\equiv 11\pmod{12}$ such that $B_p=(-1,-p|\Q)$.}
  \Output{coefficient matrices of all unique refined Humbert invariants for polarizations $\theta = \begin{psmallmatrix}u & \alpha_0 \\\overline{\alpha}_0& v\end{psmallmatrix}$.}
  $Q\leftarrow \texttt{[]}$\;
  \tcp{initialize the list of unique refined Humbert invariants}
  $\Theta \leftarrow \mathsf{dynamicPolz}(p)$\;
  \tcp{running Algorithm~\ref{alg:polz} for various choices of parameters until $\mathbf{H}$ polarizations are obtained}
  \ForEach{$\theta\in \Theta$}{
    $A\leftarrow \mathsf{RHI}(p,\theta)$\;
    \tcp{running Algorithm~\ref{alg:RHI} for each polarization}
    \If{$A\neq 0$}{
      $s \leftarrow \mathrm{true}$\;
      \tcp{assume unique}
      \ForEach{$A'\in Q$}{
        \If{$A = A'$}{
          $s\leftarrow \mathrm{false}$\;
          \tcp{not unique}
          \textbf{break}\;
        }
      }
      $L_A\leftarrow$ integer lattice with Gram matrix $A/2$\;
      \If{$s = \mathrm{true}$}{
        \ForEach{$A'\in Q$}{
          $L_{A'}\leftarrow$ integer lattice with Gram matrix $A'/2$\;
          \If{$\mathsf{isIsometric}(L_A,L_{A'})= \mathrm{true}$}{
            $s \leftarrow \mathrm{false}$\;
            \tcp{not unique}
            \textbf{break}\;
          }
        }
      }
      \If{$s = \mathrm{true}$}{
        $Q.\text{append}(A)$\;
      }
    }
  }
  \Return{$Q$}\;
\end{algorithm}

\subsection{Complexity analysis}
We analyze the runtime and space complexity of
Algorithms~\ref{alg:polz}, \ref{alg:RHI}, and~\ref{alg:allRHI} in
the order in which Algorithm~\ref{alg:allRHI} invokes them.
Analyzing Algorithms~\ref{alg:polz} and~\ref{alg:RHI} first
establishes the costs that feed directly into the analysis of
Algorithm~\ref{alg:allRHI}.

The implementation of Algorithm~\ref{alg:polz} departs from the
pseudocode in three ways that affect complexity.
First, the separate bounds $V_{\max}$ and $U_{\max}$ are collapsed into
a single bound $A_{\max}$, since every candidate is normalized to
$u \leq v$, so bounding $v \leq A_{\max}$ automatically bounds $u$.
Second, rather than iterating over residue classes
$\alpha' \in \mathcal{O}/v\mathcal{O}$ with a \textsf{ChooseLift}
step~\ref{alg:polz:6}, the implementation iterates directly over the order elements of
exact reduced norm $N = uv - 1$ per $(u,v)$ pair, selecting one
representative per equivalence class under
$\alpha \sim \beta\alpha\beta'$ for $\beta, \beta' \in
\mathcal{O}^\times$, as given by Lemma~\ref{conjugacyofpolarizations}(ii).
Third, with $T_{\max}=6$ fixed,
all theta-series computations cost $O(T_{\max}^4) = O(1)$ per candidate.
In practice, Algorithm~\ref{alg:polz} is executed as
$\mathsf{dynamicPolz}$.
Under the conjecture $A_{\max} = O(p)$, the largest
reduced norm arising at termination is $N \leq A_{\max}^2 - 1 = O(p^2)$,
and the total number of principal candidates, with results at each
$(u,v)$ pair cached so no work is repeated across stages, is
\[
    C = \sum_{1 \leq u \leq v \leq A_{\max}} O(uv)
      = O(A_{\max}^4) = O(p^4).
\]

The dominant cost is enumerating all $\alpha \in \mathcal{O}$ with
$\mathrm{nrd}(\alpha) = N = uv - 1$ for each $(u,v)$ pair, carried out
via the Fincke--Pohst algorithm~\cite{FinckePohst} applied to a
rank-$4$ integral quadratic form encoding twice the reduced norm on
$\mathcal{O}$.
In the integral basis of $\mathcal{O}$ in $B_p = (-1,-p|\mathbb{Q})$,
two of the four coordinate directions of this form are scaled by
$q_0 = (p+1)/4$, which is an integer since $p \equiv 11 \pmod{12}$
implies $p \equiv 3 \pmod{4}$.
The Fincke--Pohst algorithm exploits this through its Cholesky
preprocessing: those two directions have true extent
$O(\sqrt{N/q_0})$ rather than $O(\sqrt{N})$, reducing the node count
per $(u,v)$ pair from the naive $O(N^2)$ of a brute-force scan to
$O(N^2/q_0) = O(N^2/p)$.
Summing over all distinct norms $N$, each enumerated at most once by
caching,
\[
    T_{\mathrm{norms}}
    = \sum_{N=1}^{O(p^2)} O\!\left(\frac{N^2}{p}\right)
    = O(p^5).
\]
The remaining per-candidate costs are subordinate.
LLL reduction of the auxiliary form $F_1$ uses the Nguyen--Stehl\'e
algorithm~\cite{NS09} on a fixed-rank Gram matrix with
$O(\log p)$-bit entries, costing $O(\log p)$ per candidate and
$O(p^4 \log p)$ in total.
Computing the invariant key, consisting of local genus symbols at
$\{2,p\}$ via Jordan decomposition,
theta-series initials, minimum, and kissing number $\kappa$, costs $O(\log p)$
per candidate and $O(p^4 \log p)$ in total~\cite{conway2013sphere}.
The exact isometry test via the Plesken--Souvignier
algorithm~\cite{isometry} is called only between key-matching
candidates, and since the invariant key separates almost all
non-isometric pairs, the aggregate isometry cost is empirically
$O(p^4)$.
The overall expected runtime of $\mathsf{dynamicPolz}$ under the
conjecture $A_{\max} = O(p)$ stated above is therefore $O(p^5)$,
dominated by Fincke--Pohst enumeration, with space complexity $O(p^4)$ to store
all candidates and their auxiliary forms across stages.

Algorithm~\ref{alg:RHI} takes a fixed prime $p$ and a single
polarization $\theta$ as input, so all costs below are per-call.
The first step constructs the $6\times 6$ positive semi-definite
matrix $A$ over $\mathbb{Q}$, whose entries are polynomial in the
polarization parameters $(u_0,v_0,w_0,x_0,y_0,z_0)$ and $p$.
Under the conjecture $A_{\max} = O(p)$, we have
$u_0, v_0 = O(p)$, and the principal condition
$u_0 v_0 - \mathrm{nrd}(\alpha_0) = 1$ forces $y_0, z_0 = O(\sqrt{p})$
and $w_0, x_0 = O(p)$, so the largest entries of $A$ scale as $O(p^2)$
and carry $O(\log p)$ bits each.
Assembly of $A$ costs $O(\log p)$.

Since $A$ has rank $5$, Simon's
$\mathsf{indefiniteLLL}$~\cite{simon,watkins} reduces $A/2$ to a
form whose sixth row and column vanish, and $A'$ is the top-left
$5\times 5$ positive definite submatrix of the result.
The input is a rank-$6$ Gram matrix with $O(\log p)$-bit entries,
so $\mathsf{indefiniteLLL}$ runs in $O(\log p)$ by the standard LLL
complexity analysis.
The output $A'$ satisfies $\det(A') = 2^9 p^2$, with entries
scaling as $O(p)$.
The integer lattice $L$ with Gram matrix $A'/2$ is then a rank-$5$
positive definite lattice with $\det(L) = O(p^2)$.
$\mathsf{minVector}(L)$ is computed via the Fincke--Pohst
algorithm with Cholesky preprocessing, as in~\cite[\S 2]{FinckePohst}.
By the LLL--Minkowski bound, the minimum of $L$ is at most
$O(p^{4/5})$, and the Fincke--Pohst node count for a rank-$5$
lattice at this bound is
$O\!\left((p^{4/5})^{5/2}/\sqrt{\det(L)}\right) = O(p)$.
All other per-call costs are $O(\log p)$ or smaller.
The per-call runtime of Algorithm~\ref{alg:RHI} is therefore $O(p)$,
dominated by the Fincke--Pohst minimum computation, with space
complexity $O(1)$ as all matrices are of fixed size and no caches
are maintained.

Algorithm~\ref{alg:allRHI} proceeds in two phases as described above.
The first phase runs $\mathsf{dynamicPolz}(p)$ at a cost of $O(p^5)$
as established above.
The second phase iterates over the resulting list $\Theta$ of
$\mathbf{H}$ polarizations.
The leading term of $\mathbf{H}$ is $(p-1)(p+12)(p+23)/2880$, so
$|\Theta| = O(p^3)$.
For each $\theta \in \Theta$, one call to $\mathsf{RHI}(p,\theta)$
costs $O(p)$ as established above, giving a total cost of $O(p^4)$
for all $\mathsf{RHI}$ calls.
Of these, $\mathbf{h}(\mathbf{h}+1)/2 = O(p^2)$ return a non-zero
output, and the remaining $\mathbf{H} - \mathbf{h}(\mathbf{h}+1)/2
= O(p^3)$ calls return $0$ and contribute no further cost.

For each non-zero output $A$, exact matrix equality tests against
every $A' \in Q$ cost $O(1)$ per test since $A$ and $A'$ are
fixed-size $5 \times 5$ integer matrices, giving a total
exact-equality cost of $O(p^2 \cdot |Q|)$ over all non-zero outputs.
If exact equality fails, $\mathsf{isIsometric}$ is called via the
Plesken--Souvignier algorithm~\cite{isometry}, with empirical total
cost $O(p^2 \cdot |Q|)$ over all non-zero outputs.
The overall runtime of Algorithm~\ref{alg:allRHI} is therefore
$O(p^5)$, dominated by $\mathsf{dynamicPolz}$, with all subsequent
costs subordinate under the conjectured bounds.
The space complexity is $O(p^3)$ to store $\Theta$, plus $O(|Q|)$
for the list of unique invariants.

\subsection{Experiment}\label{experiment-data}

We run Algorithm~\ref{alg:allRHI} for all primes $p \equiv 11 \pmod{12}$
with $10 < p < 660$, implemented in Julia using the \texttt{Oscar}
package~\cite{OSCAR, OSCAR-book} and its dependency
\texttt{Nemo/Hecke}~\cite{nemo}.
The code and data are available at:
\begin{center}
\url{https://github.com/gkorpal/humbert-degree}
\end{center}
Each run was allowed a maximum of two weeks of computation time.
Table~\ref{tab:summary} records the results.
Each row of Table~\ref{tab:summary} corresponds to one prime $p$.
The column $\mathbf{H}$ is the total number of principal polarizations
on $E \times E$ up to equivalence, given by the formula in
\eqref{eq:Hshort}.
The column $\#\Theta$ is the number of polarizations that
Algorithm~\ref{alg:polz} actually found within the time limit, so
$\#\Theta \leq \mathbf{H}$ always, with equality when the computation
finished.
The column $\#\underset{\mathrm{red}}{\mathrm{RHI}}$ counts the
polarizations $\theta \in \Theta$ for which Algorithm~\ref{alg:RHI}
returns a non-zero matrix, that is, the reducible refined Humbert
invariants found.
The column $\mathbf{h}(\mathbf{h}+1)/2$, where $\mathbf{h}$ is given
by~\eqref{h:classnumber}, is the conjectured total count of reducible
refined Humbert invariants over all $\mathbf{H}$ polarizations.
The column $\#\underset{\mathrm{iso}}{\mathrm{RHI}}$ is the number of
isometry classes among those reducible refined Humbert invariants, found
experimentally via Algorithm~\ref{alg:allRHI}.
The column $\#\mathrm{Gen}(q_4)$ is the number of genera of the lattice $q_4$ defined in \eqref{eq:q4},
which equals the true number of isometry classes of
reducible refined Humbert invariants.
Finally, $d_{\mathrm{expt}}$ is the value of $d$ as defined
in~\eqref{maxmin}, computed from the experimental isometry classes in
$\#\underset{\mathrm{iso}}{\mathrm{RHI}}$, and $d_{\mathrm{gen}}$ is
the same quantity computed from the genus classes counted by
$\#\mathrm{Gen}(q_4)$.

When $\#\Theta = \mathbf{H}$, Algorithm~\ref{alg:allRHI} completed
within the time limit and the experimental columns recover the
conjectured values exactly: $\#\underset{\mathrm{red}}{\mathrm{RHI}}$
agrees with $\mathbf{h}(\mathbf{h}+1)/2$,
$\#\underset{\mathrm{iso}}{\mathrm{RHI}}$ agrees with
$\#\mathrm{Gen}(q_4)$, and $d_{\mathrm{expt}}$ agrees with
$d_{\mathrm{gen}}$.
When $\#\Theta < \mathbf{H}$, the computation did not finish within
two weeks, and the experimental columns reflect only the polarizations
found so far.
In this case $\#\underset{\mathrm{red}}{\mathrm{RHI}} \leq
\mathbf{h}(\mathbf{h}+1)/2$,
$\#\underset{\mathrm{iso}}{\mathrm{RHI}} \leq \#\mathrm{Gen}(q_4)$,
and $d_{\mathrm{expt}} \leq d_{\mathrm{gen}}$, as the missing
polarizations may contribute additional invariants and larger degree
values.

\begin{table}[!ht]
    \caption{We get the following results for $\mathsf{allRHI}(p)$ for primes $p\equiv 11\pmod{12}$ with $10 < p < 660$ and a time limit of two weeks. Further explanation in text.}\label{tab:summary}
    \centering 

    \begin{tabular}{ccccccccc}
        \toprule
        $p$ & $\#\Theta$ & $\mathbf{H}$ & $\#\underset{\text{red}}{\text{RHI}}$ & $\frac{\mathbf{h}(\mathbf{h}+1)}{2}$ & $\#\underset{\text{iso}}{\text{RHI}}$ & $\#{\Gen(q_4)}$ & $d_{\text{expt}}$ & $d_{\text{gen}}$  \\
        \midrule 
        11 & 5 & 5 & 3  & 3 & 3 & 3 & 2 & 2\\ 
        23 & 16 & 16 & 6 & 6 & 6 & 6 &  3 & 3\\ 
        47 & 72 & 72 & 15 & 15 & 15 & 15 & 4 & 4\\
        59 & 125 & 125 & 21 & 21 & 21 & 21 &  5 & 5 \\
        71 & 198 & 198 & 28 & 28 & 28 & 28 & 5 & 5\\
        83 & 296 & 296 & 36 & 36 & 29 & 29 & 6 & 6\\  
        107 & 581 & 581 & 55 & 55 & 39 & 39 & 6 & 6\\ 
        131 & 1008 & 1008 & 78 & 78 & 67 & 67 & 7 & 7 \\ 
        167 & 1978 & 1978 & 120 & 120 & 94 & 94 & 8 & 8\\ 
        179 & 2404 & 2404 & 136 & 136 & 97 & 97 & 9 & 9\\ 
        191 & 2886 & 2886 & 153 & 153 & 123 & 123 & 9 & 9 \\ 
        227 & 4708 & 4712 & 206 & 210 & 131 & 135 & 10 & 10 \\ 
        239 & 5458 & 5460 & 229 & 231 & 175 & 177 & 10 & 10 \\ 
        251 & 6275 & 6281 & 247 & 253 & 176 &  181 & 10  & 10\\ 
        263 & 7173 & 7182 & 268 & 276 & 180 & 186 & 10  & 11\\
        311 & 11592 & 11644 & 347 & 378 & 259 & 286 & 11  & 12 \\
        347 & 15770 & 15993 & 412 & 465 & 229 & 265 & 12 & 12 \\
        359 & 17210 & 17654 & 418 & 496 & 279 & 346 & 12  & 12 \\
        383 & 20264 & 21310 & 468 & 561 & 289 & 361  & 12  & 13\\
        419 & 24753 & 27692 & 515 & 666 & 316 & 423 & 13  & 14 \\ 
        431 & 25768 & 30072 & 484 & 703 & 306 & 471 & 13 & 14\\
        443 & 27240 & 32585 & 585 & 741 & 316 & 405 & 12 & 13\\
        467 & 29288 & 38024 & 550 & 820 & 305 & 469 & 13 & 14\\
        479 & 29323 & 40958 & 529 & 861 & 351 & 597 & 13 & 15\\
        491 & 2527 & 44037 & 93 & 903 & 59 & 543 & 10 & 14 \\
        503 & 31476 & 47268 & 564 & 946 & 334 & 594 & 13 & 15 \\
        563 & 33555 & 65808 & 617 & 1176 & 346 & 681 & 14 & 15 \\
        587 & 20151 & 74405 & 413 & 1275 & 229 & 699 & 13 & 16 \\
        599 & 33866 & 78972 & 594 & 1326 & 364 & 832 & 14 & 16\\
        647 & 33331 & 99102 & 595 & 1540 & 347 & 916 & 13 & 16\\
        659 & 33230 & 104620 & 556 & 1596 & 321 & 933 & 14 & 17\\
         \bottomrule
    \end{tabular}
\end{table}

\section{Applications to isogeny problems} \label{sec:apps}
The efficient computation of refined Humbert invariants is an open avenue to work on a variety of isogeny problems\footnote{The potential advantage that we expect from a refined Humbert invariant $q_{(\A,\theta)}$ of a principally polarized abelian surface $(\A,\theta)$ is that it uses the intersection theory of divisors on surfaces, see \cite[\S V.1]{hartshorne2013algebraic} and \cite[\S 4.1]{shafarevich}, and only the degrees of isogenies are used rather than isogenies themselves in the intersection formulas.}. 
They were first used in isogeny-based cryptography in \cite{edachloe}, where their computations were shown to be equivalent to certain isogeny problems.

\subsection{Testing isomorphism of two given principal polarizations}

We now highlight the isomorphism test for principal polarizations used in steps~\ref{alg:polz:20}--\ref{alg:polz:23} of Algorithm~\ref{alg:polz}, which enumerates all principal polarizations on $\A=E\times E$.

For deciding the isomorphism of two principal polarizations $\theta \mapsto (F_{1},F_{2},F_{3},F_{4})$ and 
$\theta' \mapsto (F'_1,F'_2,F'_3,F'_4)$ as in \eqref{traceforms}, we declare that $\theta$ and $\theta'$ are equivalent if their packets are \emph{simultaneously isometric}, i.e., if there exists a matrix
$T\in \mathrm{GL}_8(\mathbb{Z})$
such that $
T F_{i} T^{t}=F'_{i}$ for all  $i=1,2,3,4$.
For computational simplicity, we replace the auxiliary trace forms \eqref{traceforms} by their LLL-reduced forms.
Since the first auxiliary form is positive definite, we first compare inexpensive isometry invariants, including the local genus, initial coefficients of the theta series, and the kissing number ($\kappa$). If any of these invariants differ, we reject the pair; otherwise, we proceed to an exact simultaneous isometry test.

\subsection{Type of a principally polarized superspecial abelian surface} 

The geometric type of a principally polarized superspecial abelian surface can be effectively detected using its refined Humbert invariant and the irreducibility criterion, corresponding exactly to steps~\ref{alg:RHI:5}--\ref{alg:RHI:6} of Algorithm~\ref{alg:RHI}.

More precisely, the irreducibility criterion (Proposition~\ref{irreducibilitycriterion}) shows that $q_{(\A,\theta)}$ represents $1$ if and only if $(\A,\theta)$ is a product of elliptic curves with product polarization; otherwise, it is the Jacobian of a genus-$2$ curve with its canonical principal polarization. This efficient criterion can be incorporated into the $\mathrm{KLPT}^2$ algorithm \cite{CDKLPT25,PRS26} to distinguish the relevant types of principally polarized abelian surfaces\footnote{No direct algorithm is currently known for computing these invariants from curve equations; the criterion therefore applies only when a principal polarization is given as input.}.

\subsection{Improvement on the upper bound on minimum isogeny degrees } 

Recall from \eqref{maxmin} that $d$ is the maximum of the minimum isogeny degree, taken over all pairs $(E_1, E_2)$ of supersingular elliptic curves. Goren--Lauter~\cite{GorenLauterBound} showed that this minimum degree is bounded above by $\frac{2\sqrt{2}}{\pi}\sqrt{p}$, a bound later reproved in~\cite[Lemma 12]{SQIsignHD}. We prove a sharper bound.

\begin{proposition}\label{prop:bound}
Let $E$ and $E'$ be supersingular elliptic curves over $\overline{\mathbb{F}}_p$,
let $\mathcal{A} = E \times E'$, let $q_{E,E'}$ be the degree map as in
Definition~\ref{defn:degreemap}, and suppose the refined Humbert invariant
of $(\mathcal{A},\theta)$ satisfies
$q_{(\mathcal{A},\theta)}(x,\varphi) = x^2 + 4q_{E,E'}(\varphi)$
as in Lemma~\ref{fromRHItodegreemap}. Then $\min q_{E,E'}(\varphi)\leq \sqrt{\frac{p}{2}}$, where the minimum is taken over all nonzero $\varphi \in \Hom(E,E')$. In particular, $d \leq \sqrt{\frac{p}{2}}$.
\end{proposition}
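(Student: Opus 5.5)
The plan is to bound the minimum of the positive definite quaternary form $q_{E,E'}$ on $\Hom(E,E')$ using only its rank and determinant, via Hermite's constant in dimension four. The determinant will come from the quintic refined Humbert invariant. By Lemma~\ref{fromRHItodegreemap}, $q_{(\mathcal{A},\theta)} \sim x^2 \perp 4q_{E,E'}$. Since $\rho(\mathcal{A})=6$, Proposition~\ref{prop:detNS} together with the computation \eqref{eq:29p2} gives $\det(q_{(\mathcal{A},\theta)})=2^9p^2$ for the Gram matrix normalized as $T^tAT$, i.e.\ twice the form. The block decomposition then yields $\det$ of the Gram matrix of $x^2$, namely $2$, times $\det$ of the Gram matrix of $4q_{E,E'}$, namely $8^4\det(q_{E,E'})$ in the $2\times$ normalization. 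Solving, the Gram matrix $M$ of $\beta_d$ (so that $q_{E,E'}(v)=\tfrac12 v^tMv$) has $\det M = p^2$. This is consistent with the norm form \eqref{eq:normform}, whose matrix has determinant $p^2$. In fact every $\Hom(E,E')$ is a left ideal of reduced norm related to a maximal order and has the same discriminant, so this step is independent of the choice of $E,E'$.

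Next, I apply Hermite's theorem for the positive definite quadratic form $f(v)=v^tMv=2q_{E,E'}(v)$ in $n=4$ variables:
\[
\min_{v\neq 0} f(v)\le \gamma_4\,(\det M)^{1/4},\qquad \gamma_4^4=4,
\]
using the known exact value $\gamma_4=\sqrt2$ (the $D_4$ lattice is extremal, Korkine--Zolotarev). This gives $2\min q_{E,E'}\le \sqrt2\cdot p^{1/2}$, hence $\min q_{E,E'}\le \sqrt{p/2}$. Taking the maximum over all pairs $(E_1,E_2)$ in \eqref{maxmin} immediately yields $d\le\sqrt{p/2}$.

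The main obstacle is the normalization bookkeeping. I must make sure the determinant fed into Hermite's inequality is that of the matrix $M$ with $f=v^tMv$, not of $\tfrac12 M$. With the wrong convention the bound changes by a factor of $2^{1/2}$ or more: using $\det(\tfrac12 M)=p^2/16$ and form $q$ gives $\sqrt2\,(p^2/16)^{1/4}=\sqrt{p/2}$ as well, so both routes agree, and that agreement is the consistency check I will present. I will also remark that the minimum is attained by a nonzero $\varphi$, so it is genuinely an isogeny degree. Finally, the hypothesis on $q_{(\mathcal{A},\theta)}$ is used only to extract $\det q_{E,E'}$, and alternatively one can read it off directly from \eqref{eq:normform}.
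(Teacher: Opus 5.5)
Your proposal is correct and is essentially the paper's argument: you extract the determinant of the degree form from $\det q_{(\mathcal{A},\theta)}=2^9p^2$ via the orthogonal splitting $x^2\perp 4q_{E,E'}$, then apply Hermite's inequality with $\gamma_4=\sqrt{2}$. The paper phrases the same computation through the successive minima $\lambda_2(L)=\lambda_1(M)$ of the rank-$4$ sublattice carrying $4q_{E,E'}$, taking $\det M=2^4p^2$ in the normalization $q(v)=v^tGv$; by homogeneity of Hermite's bound this gives the same estimate $\min q_{E,E'}\le\sqrt{p/2}$ as yours.
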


\begin{proof}
Let $L = L_{q_{(\mathcal{A},\theta)}}$ denote the rank-$5$ lattice associated
with $q_{(\mathcal{A},\theta)}$, and let $e = (1,0) \in \mathbb{Z} \oplus \Hom(E,E')$.
Then $q_{(\mathcal{A},\theta)}(e) = 1$, and the decomposition
$q_{(\mathcal{A},\theta)}(x,\varphi) = x^2 + 4q_{E,E'}(\varphi)$ gives an
orthogonal decomposition $L = \mathbb{Z}e \perp M$, where $M = e^\perp \cap L$
has rank $4$ and $q_{(\mathcal{A},\theta)}|_M = 4q_{E,E'}$.
Since $q_{(\mathcal{A},\theta)}$ is an integral quadratic form that represents $1$,
it represents no smaller positive value, so $\lambda_1(L) = 1$.
Every vector of $L$ linearly independent of $e$ has the form $ae + v$ with
$a \in \mathbb{Z}$ and $0 \neq v \in M$, and by orthogonality
$q_{(\mathcal{A},\theta)}(ae + v) = a^2 + q_{(\mathcal{A},\theta)}(v) \geq q_{(\mathcal{A},\theta)}(v)$.

On the other hand, every nonzero vector of $M$ is linearly independent of $e$,
so $\lambda_2(L) = \lambda_1(M)$. Since $q_{(\mathcal{A},\theta)}|_M = 4q_{E,E'}$,
it follows that $\lambda_2(L) = \lambda_1(M) = 4 \min q_{E,E'}(\varphi)$, where the minimum is taken over all nonzero $\varphi \in \Hom(E,E')$.
By \eqref{eq:29p2}, we have $\det(q_{(\mathcal{A},\theta)}) = 2^9 p^2$.
The orthogonal decomposition $L = \mathbb{Z}e \perp M$ with $q_{(\mathcal{A},\theta)}(e) = 1$
then gives $\det(M) =\frac{\det(q_{\A,\theta})}{2}\cdot 2^{-4} = 2^4 p^2$. Applying Minkowski's inequality~\cite[Theorem 2.6.8]{martinet13} in dimension $4$ with Hermite constant $\gamma_4 = \sqrt{2}$~\cite[Table 3.1]{conway2013sphere} gives
\begin{equation*}
  \lambda_1(M) \leq \gamma_4 \det(M)^{1/4} = \sqrt{2} \cdot (4p)^{1/2}
  = 2\sqrt{2p}.
\end{equation*}
Therefore, $4\min q_{E,E'}(\varphi)\leq 2\sqrt{2p}$, and hence
$\min q_{E,E'}(\varphi) \leq \sqrt{\frac{p}{2}}$.
\end{proof}

Our computational experiment verifies the proven bound. For primes $p\equiv 11\pmod{12}$, the maximum, over all pairs of supersingular elliptic curves $E_1,E_2$, of the minimum degree of an isogeny between $E_1$ and $E_2$ is approximately $0.67\sqrt p$.  We observe that  $d < \sqrt{\frac{p}{2}}$, which is illustrated by the plot in Figure~\ref{fig:trendSQI}. 
Moreover, the computations of Goren--Lauter also support that the constant in our bound is close to optimal. Indeed, among the examples reported in \cite[Table~1]{GorenLauterBound}, the largest observed ratio is attained at $p=10007$; this gives
$\frac{70}{\sqrt{10007}} \approx 0.6998$,
which is very close to our theoretical constant
$\frac{1}{\sqrt{2}} \approx 0.7071$.

\begin{figure}[!ht]
    \centering
    \begin{tikzpicture}
    \begin{axis}[
        width=10cm, height=6.5cm,
        xlabel={$p$}, ylabel={$d_{\text{gen}}$},
        ylabel style={rotate=-90},
        legend style={at={(1.02,1)}, anchor=north west, font=\tiny},
        legend cell align=left,
        clip=false,
        grid=major,
        major grid style={line width=0.25pt, draw=gray!25},
        domain=10:700, samples=250
    ]
        \addplot[gray!55, line width=0.7pt, dashed]
            {4.861513*pow(x,0.25) - 8.593322};
        \addlegendentry{$4.8615\,\sqrt[4]{p} - 8.5933$}

        \addplot[gray!55, line width=0.7pt, dotted]
            {2.391002*pow(x,0.3333333333) - 4.490172};
        \addlegendentry{$2.3910\,\sqrt[3]{p} - 4.4902$}

        \addplot[gray!55, line width=0.7pt, densely dashed]
            {4.050280*ln(x) - 11.164465};
        \addlegendentry{$4.0503\,\ln(p) - 11.1645$}

        \addplot[gray!55, line width=0.7pt, loosely dotted]
            {0.021061*x + 4.126784};
        \addlegendentry{$0.0211\,p + 4.1268$}

        \addplot[color=blue!70!black, very thick, solid]
            {0.667944*sqrt(x) - 0.319268};
        \addlegendentry{$0.6679\,\sqrt{p} - 0.3193$}

        \addplot[color=red!70!black, very thick, dashdotted]
            {sqrt(x/2)};
        \addlegendentry{$\sqrt{\frac{p}{2}}\approx 0.7071\sqrt{p}$}

        \addplot[color=black, mark=*, mark size=1.2pt, only marks]
            table [col sep=space, x=p, y=d] {tab.dat};
        \addlegendentry{Data points}
    \end{axis}

    \node[anchor=north west, font=\tiny, align=left, inner sep=0pt]
        at (rel axis cs:1.1,0.45) {%
        \begin{tabular}{@{}lrrr@{}}
            \toprule
            Model & RMSE & MAE & $R^2$ \\
            \midrule
            $\sqrt{p}$ & 0.387 & 0.328 & 0.992 \\
            $\sqrt[3]{p}$ & 0.499 & 0.418 & 0.986 \\
            $\sqrt[4]{p}$ & 0.632 & 0.503 & 0.978 \\
            $p$ & 0.945 & 0.797 & 0.950 \\
            $\ln p$ & 1.161 & 0.949 & 0.925 \\
            \bottomrule
        \end{tabular}
    };
\end{tikzpicture}
    \caption{Visualizing the data collected in Table~\ref{tab:summary} for first 31 primes $p>10$ and $p\equiv 11\mod 12$ and comparing with our bound in Proposition~\ref{prop:bound}. Among the tested two-parameter models, the square-root form gives the best fit. Lower root mean square error (RMSE) and mean absolute error (MAE) are better; higher coefficient of determination ($R^2$) is better.}
    \label{fig:trendSQI}
\end{figure}

\subsection{The minimum isogeny degree frequency}

The minimum isogeny degree is approximately $\sqrt{p}$, a bound that can be established through several different methods. In particular, Galbraith, Petit, Shani, and Ti showed that an isogeny of degree $\lesssim\sqrt{p}$ is, with high probability, the shortest isogeny between the two curves it connects \cite[\S 4.2]{endomorphismfixedisogeny}. Their argument relies on properties of Ramanujan graphs together with a counting argument, but it does not account for collisions. Love and Boneh, however, show that collisions are rare for degrees $\leq \sqrt{p}$ \cite{loveboneh}.

Our data on degree maps lets us examine how frequently the minimum isogeny degree is achieved, and we find that repetitions are common. For every prime $p \leq 659$ with $p \equiv 11 \pmod{12}$, we computed the complete set of degree maps, and the resulting distributions of minimum isogenies confirm these bounds, as illustrated by the case $p = 659$ in Figure~\ref{fig:degrees}.

\begin{figure}[!ht]
    \centering

\begin{tikzpicture}
    \begin{axis}[
        width=12cm,
        height=6cm,
        ybar interval,
        ymin=0,
        ymax=156,
        enlarge x limits=0.04,
        xlabel={Minimum Degree},
        ylabel={Frequency},
        axis lines=left,
        axis line style={-},
        grid=none,
        xmajorgrids=false,
        ymajorgrids=false,
        tick align=outside,
        xtick=data,
        xticklabels from table={freq.dat}{center},
    ]
        \addplot[
            draw=blue!60!black,
            fill=blue!30,
        ] table [x expr=\thisrow{center}-0.5, y=freq, col sep=space] {freq.dat};

        \addplot[
            only marks,
            mark=none,
            draw=none,
            forget plot,
            nodes near coords,
            every node near coord/.append style={font=\scriptsize, anchor=south, yshift=1pt},
            restrict x to domain=1:17.5,
        ] table [x=center, y=freq, col sep=space] {freq.dat};
    \end{axis}
\end{tikzpicture}
    \caption{For $p=659$, $\mathbf{H}=104,620$ unique polarizations would lead to $\frac{\mathbf{h}(\mathbf{h}+1)}{2}=1596$ reducible refined Humbert invariants belonging to one of the $\#\Gen(q_4)=933$ isometry classes of the form $t_0^2 + 4q_{E_1,E_2}(t_1,t_2,t_3,t_4)$. Here we see the distribution of minimum values of all the degree maps $q_{E_1, E_2}$ we obtained via genus computation.}\label{fig:degrees}
\end{figure}

\subsection{The fixed degree isogeny problem}

The \emph{fixed-degree isogeny problem} asks: given supersingular elliptic curves $E_1$ and $E_2$ defined over the finite field $\F_{p^2}$, and given a positive integer $N$, find an isogeny $\varphi: E_1 \rightarrow E_2$ of degree $N$, if it exists. Finding a fixed-degree isogeny is only known to be equivalent to endomorphism ring computation when the isogeny degree is smaller than $\sqrt{p}$, where $p = \charec(K)$ \cite{endomorphismfixedisogeny}. Heuristically, there exists an isogeny of smooth degree $\approx p$, yet no efficient algorithm is known for finding one of that size. For isogenies of degree much larger than $p$, the problem can be solved efficiently using the KLPT algorithm~\cite{KLPT} or its generalization~\cite{SQIsign}, while for small degrees, standard lattice-reduction techniques suffice. The intermediate range of isogeny degrees was investigated in \cite{normform}, which gave improved algorithms for the fixed-degree isogeny problem. Furthermore, \cite{fixeddegreespecialcase} provides a heuristic algorithm that runs in polynomial time for any degree $N$, under certain conditions. The fixed-degree isogeny problem can, in turn, be improved by an efficient algorithm for computing refined Humbert invariants, since this allows us to obtain degree maps.

\section{Conclusion}\label{sec:conclusion}
We summarize our contributions. First, we give an algorithm to enumerate principal polarizations and compute refined Humbert invariants of principally polarized superspecial abelian surfaces. Second, we give an algorithm to determine the type of a given principally polarized superspecial abelian surface. Third, we prove that the upper bound on the minimum isogeny degree between two arbitrary supersingular elliptic curves is $\sqrt{\frac{p}{2}}$ for a generic prime $p$, improving on the bounds of \cite[Section 5.4.1]{GorenLauterBound} and \cite[Lemma 12]{SQIsignHD}, and support this bound experimentally. Fourth, we present experiments on the frequency of minimum isogeny degrees across degree maps over $\overline{\F}_p$ for primes up to $p=659$ with $p\equiv 11\pmod{12}$. Finally, we give a different approach to the fixed-degree isogeny problem by computing degree maps directly from polarization data.

\bibliographystyle{plain}
\bibliography{references}

\end{document}